\newcommand{\nc}{\newcommand}
\nc{\les}{\lesssim}
\nc{\nit}{\noindent}
\nc{\nn}{\nonumber}
\nc{\D}{\partial}
\nc{\diff}[2]{\frac{d #1}{d #2}}
\nc{\diffn}[3]{\frac{d^{#3} #1}{d {#2}^{#3}}}
\nc{\pdiff}[2]{\frac{\partial #1}{\partial #2}}
\nc{\pdiffn}[3]{\frac{\partial^{#3} #1}{\partial{#2}^{#3}}}
\nc{\abs}[1] {\lvert #1 \rvert}
\nc{\cAc}{{\cal A}_c}
\nc{\cE}{{\cal E}}
\nc{\cF}{{\cal F}}
\nc{\cP}{{\cal P}}
\nc{\cV}{{\cal V}}
\nc{\cQ}{{\cal Q}}
\nc{\cGin}{{\cal G}_{\rm in}}
\nc{\cGout}{{\cal G}_{\rm out}}
\nc{\cO}{{\cal O}}
\nc{\Lav}{{\cal L}_{\rm av}}
\nc{\cL}{{\cal L}}
\nc{\cB}{{\cal B}}
\nc{\cZ}{{\cal Z}}
\nc{\cR}{{\cal R}}
\nc{\cT}{{\cal T}}
\nc{\cY}{{\cal Y}}
\nc{\cX}{{\cal X}}
\nc{\cXT}{{{\cal X}(T)}}
\nc{\cBT}{{{\cal B}(T)}}
\nc{\vD}{{\vec \mathcal{D}}}
\nc{\efield}{\mathcal{E}}
\nc{\vE}{{\vec \efield}}
\nc{\vB}{{\vec \mathcal{B}}}
\nc{\vH}{{\vec \mathcal{H}}}
\nc{\mR}{\mathcal R}
\nc{\mF}{\mathcal F}
\nc{\ty}{{\tilde y}}
\nc{\tu}{{\tilde u}}
\nc{\tV}{{\tilde V}}
\nc{\Pc}{{\bf P_c}}
\nc{\bx}{{\bf x}}
\nc{\bX}{{\bf X}}
\nc{\bXYZ}{{\bf XYZ}}
\nc{\bY}{{\bf Y}}
\nc{\bF}{{\bf F}}
\nc{\bS}{{\bf S}}
\nc{\dV}{{\delta V}}
\nc{\dE}{{\delta E}}
\nc{\TT}{{\Theta}}
\nc{\dPsi}{{\delta\Psi}}
\nc{\order}{{\cal O}}
\nc{\Rout}{R_{\rm out}}
\nc{\eplus}{e_+}
\nc{\eminus}{e_-}
\nc{\epm}{e_\pm}
\nc{\sgn}{\text{sgn}}
\nc{\eps}{\varepsilon}
\nc{\vnabla}{{\vec\nabla}}
\nc{\G}{\Gamma}
\nc{\w}{\omega}
\nc{\mh}{h}
\nc{\mg}{g}
\nc{\vphi}{\varphi}
\nc{\tlambda}{\tilde\lambda}
\nc{\be}{\begin{equation}}
	\nc{\ee}{\end{equation}}
\nc{\ba}{\begin{eqnarray}}
	\nc{\ea}{\end{eqnarray}}
\nc{\g}{\gamma}
\nc{\ol}{\overline}
\newtheorem{theorem}{Theorem}[section]
\newtheorem{lemma}[theorem]{Lemma}
\newtheorem{prop}[theorem]{Proposition}
\newtheorem{corollary}[theorem]{Corollary}
\newtheorem{rmk}[theorem]{Remark}
\nc{\pT}{\partial_T}
\nc{\pz}{\partial_z}
\nc{\pt}{\partial_t}
\nc{\la}{\langle}
\nc{\ra}{\rangle}
\nc{\infint}{\int_{-\infty}^{\infty}}
\nc{\halfwidth}{6.5cm}
\nc{\figwidth}{10cm}
\newcommand{\f}{\frac}
\nc{\nlayers}{L} \nc{\nsectors}{M}
\nc{\indicator}{\mathbf{1}}
\nc{\Rhole}{R_{\rm hole}}
\nc{\Rring}{R_{\rm ring}}
\nc{\neff}{n_{\rm eff}}
\nc{\Frem}{F_{\rm rem}}
\nc{\R}{\mathbb R}
\nc{\mJ}{\mathcal J}
\nc{\C}{\mathbb C}
\nc{\Z}{\mathbb Z}
\nc{\N}{\mathbb N}
\nc{\DD}{\Delta}
\nc{\cD}{\mathcal D}
\nc{\lnorm}{\left\|}
\nc{\rnorm}{\right\|}
\nc{\rnormp}{\right\|_{\ell^{p,\eps}}}
\nc{\rar}{\rightarrow} 
\newcommand{\Kato}{{\mathcal K}}
\newcommand{\U}{{\mathcal U}}
\newcommand{\B}{{\mathcal B}}
\def\norm[#1][#2]{\|#1\|_{#2}}
\newtheorem{proposition}[theorem]{Proposition}
\theoremstyle{remark}
\begin{document}
	
	\begin{abstract}
		
		We prove a family of dispersive estimates for the higher order Schr\"odinger equation $iu_t=(-\Delta)^mu +Vu$ in $n$ spatial dimensions, for $m\in \mathbb N$ with $m>1$ and $2m<n<4m$.  Here $V$ is a real-valued potential belonging to the closure of $C_0$ functions with respect to the generalized Kato norm, which has critical scaling.  Under standard assumptions on the spectrum, we show that $e^{-itH}P_{ac}(H)$ satisfies a $|t|^{-\frac{n}{2m}}$ bound mapping $L^1$ to $L^\infty$ by adapting a Wiener inversion theorem.  We further show the lack of positive resonances for the operator $(-\Delta)^m +V$ and a family of dispersive estimates for operators of the form $|H|^{\beta-\frac{n}{2m}}e^{-itH}P_{ac}(H)$ for $0<\beta\leq \frac{n}{2}$. The results apply in both even and odd dimensions in the allowed range.
		
	\end{abstract}

	\title[Dispersive estimates for higher order  Schr\"odinger operators]{\textit{Dispersive estimates for higher order Schr\"odinger operators with scaling-critical potentials} } 
	
	\author[Erdo\smash{\u{g}}an, Goldberg, Green]{M. Burak Erdo\smash{\u{g}}an, Michael Goldberg and William~R. Green}
	\thanks{  The first author was partially supported by the NSF grant  DMS-2154031 and Simons Foundation Grant 634269.  The second author is partially supported by Simons Foundation
		Grant 635369. The third author is partially supported by Simons Foundation
		Grant 511825. }
	\address{Department of Mathematics \\
		University of Illinois \\
		Urbana, IL 61801, U.S.A.}
	\email{berdogan@illinois.edu}
	\address{Department of Mathematics\\
		University of Cincinnati \\
		Cincinnati, OH 45221 U.S.A.}
	\email{goldbeml@ucmail.uc.edu}
	\address{Department of Mathematics\\
		Rose-Hulman Institute of Technology \\
		Terre Haute, IN 47803, U.S.A.}
	\email{green@rose-hulman.edu}

	\maketitle

	\section{Introduction}
	
	We consider the higher order Schr\"odinger equation 
	\begin{align*}
		i\psi_t =(-\Delta)^m\psi +V\psi, \qquad x\in \R^n,  \quad  m\in \mathbb N,
	\end{align*}
	with a real-valued, decaying potential  $V$ in $n$ spatial dimensions when $2m<n<4m$.
	
	We denote the free higher order Schr\"odinger operator as $H_0=(-\Delta)^m$ and the perturbed operator by  $H=(-\Delta)^m+V(x)$.
	The free solution operator $e^{-itH_0}$ satisfies the dispersive estimate
	$$
	\|e^{-it H_0}\|_{1\to \infty} \leq C_{n,m} |t|^{-\frac{n}{2m}}.
	$$ 
	We show, with $P_{ac}(H)$ the projection onto the absolutely continuous subspace of $H$, that the solution operator $e^{-itH}P_{ac}(H)$ satisfies the same bound for a class of scaling-critical potentials.  That is, due to commutation relations between powers of the Laplacian and dilations, perturbing $H_0$ by a re-scaled potential of the form $V_r(x)=r^{2m} V(rx)$ obeys the same dispersive bounds as perturbing by $V$.  The admissible class of potentials in our argument is invariant under the scaling from $V$ to $V_r$.
	
	A natural way to capture scaling properties is through the use of Kato-type norms.	For $0 \leq \alpha \leq n$, we define the $\alpha$-Kato norm in $\R^n$ to be
	\begin{align} \label{eq:KatoNorm}
		\|V\|_{\Kato^\alpha} =\sup_{y\in \mathbb R^n} \int_{\R^n} \frac{|V(x)|}{|x-y|^\alpha}\, dx.
	\end{align}
	When $\alpha = 0$ this is the $L^1(\R^n)$ norm. The Kato norm with $\alpha = n-2m$ has critical scaling with respect to the higher order Laplacian in the sense above, and naturally arises in that $V(-\Delta)^{-m}$ is a bounded operator on $L^1(\R^n)$ precisely when $V \in \Kato^{n-2m}$. In our main results we consider potentials in the closure of $C_0$ with respect to the $\Kato^{n-2m}$ norm and we denote this subspace $\Kato$.  
	
	To state our main theorem, we recall that a resonance at energy $\lambda$ is a distributional solution to $H\psi=\lambda \psi$ with $\psi\notin L^2(\R^n)$, but in a slightly larger space depending on the dimension and order of the Laplacian.  For $H=(-\Delta)^m+V$, a resonance occurs when $(1+|x|)^{-\sigma}\psi \in L^2$ for some $\sigma> 2m-\frac{n}{2}$.  It may similarly be characterized in terms of non-invertibility of certain operators related to the free resolvent.	
	Here, and throughout the paper, we write $A\les B$ to mean there exists a constant $C>0$ so that $A\leq CB$.  Our main result is
	
	\begin{theorem}\label{thm:main}
		
		Let $m\in \mathbb N$ be such that $2m<n<4m$. If $V\in \Kato$, the closure of $C_0$ functions in the $\Kato^{n-2m}$ norm, is real-valued such that the operator $H=(-\Delta)^m+V$ has no eigenvalues on $[0,\infty)$ and no resonance at zero energy, then
		$$
		\|e^{-itH} P_{ac}(H)\|_{L^1\to L^\infty} \les |t|^{-\frac{n}{2m}}.
		$$
		
	\end{theorem}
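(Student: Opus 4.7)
The plan is to start with Stone's formula, expressing the spectral projection via boundary values of the resolvent,
\[
e^{-itH}P_{ac}(H) = \frac{1}{2\pi i}\int_0^\infty e^{-it\lambda}\bigl[R_V^+(\lambda) - R_V^-(\lambda)\bigr]\, d\lambda,
\]
where $R_V^\pm(\lambda) = \lim_{\epsilon\to 0^+}(H - \lambda \mp i\epsilon)^{-1}$. Under the substitution $\lambda = \mu^{2m}$ the free resolvent decomposes by partial fractions into Helmholtz-type pieces,
\[
\bigl((-\Delta)^m - \mu^{2m}\bigr)^{-1} = \frac{1}{m\mu^{2(m-1)}}\sum_{j=0}^{m-1} z_j\bigl((-\Delta) - \mu^2 z_j\bigr)^{-1}, \qquad z_j = e^{2\pi i j/m},
\]
each of which is a standard Laplacian resolvent with explicit kernel. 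The free dispersive bound $\|e^{-itH_0}\|_{L^1\to L^\infty} \lesssim |t|^{-n/(2m)}$ is known, so the job is to show the perturbed evolution inherits the same decay.

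The symmetric resolvent identity $R_V^\pm = R_0^\pm - R_0^\pm V R_V^\pm$ formally gives $R_V^\pm = (I + R_0^\pm V)^{-1} R_0^\pm$. Since $V$ is not small, the Born series $\sum_k (-R_0^\pm V)^k R_0^\pm$ need not converge in operator norm; the plan is instead to invert $I + R_0^\pm(\lambda)V$ in an operator-valued Wiener algebra, namely the space of operator-valued functions of $\lambda$ whose Fourier transform is a bounded-total-variation measure in the $L^1 \to L^\infty$ operator norm. The $\Kato^{n-2m}$ norm is tuned exactly so that $VR_0^\pm(\lambda)$ — whose kernel carries the spatial singularity $V(x)|x-y|^{-(n-2m)}$ — acts boundedly on $L^\infty$ with control independent of $\lambda$, and sufficient iteration supplies the smoothness in $\lambda$ needed to place the symbol in the algebra. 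A Wiener-type inversion theorem then upgrades pointwise invertibility of $I + R_0^\pm(\lambda)V$ at each $\lambda \in [0,\infty)$ — which is exactly what the hypotheses of no positive eigenvalues and no zero-energy resonance guarantee — to invertibility inside the algebra. Reinserting into Stone's formula produces the claimed $L^1 \to L^\infty$ bound.

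To execute this, I would partition $\lambda$ into low, intermediate, and high energy regimes by a smooth cutoff, and first prove the estimate for $V \in C_0$ before extending to all $V \in \Kato$ by density. The intermediate and high energies are controlled by decay of the free resolvent together with enough iterates of $R_0^\pm V$ to gain smoothness in $\lambda$; the role of the absence of positive resonances (established separately in the paper) is to rule out a failure of invertibility at large $\lambda$. The main obstacle is the low-energy regime: as $\mu \to 0^+$ the Helmholtz pieces reduce to the Laplacian resolvent with kernel $\sim |x-y|^{-(n-2)}$, and invertibility of $I + R_0^\pm(\lambda)V$ must persist uniformly down to $\lambda = 0$ — this is the content of the no-zero-resonance hypothesis. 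A secondary technical point is verifying the algebra and regularity conditions of the operator-valued Wiener theorem in a scaling-critical setting where $V$ has no imposed polynomial decay; all estimates must be scale-invariant, which is precisely why the $\Kato$ framework is the natural choice.
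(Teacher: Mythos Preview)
Your overall architecture---Stone's formula, the resolvent identity $\mR_V^\pm = (I + \mR_0^\pm V)^{-1}\mR_0^\pm$, and an operator-valued Wiener inversion in a scale-invariant algebra built from the $\Kato^{n-2m}$ norm---matches the paper's strategy, and you correctly identify the spectral hypotheses as precisely what guarantees pointwise invertibility of $I + \mR_0^\pm(\lambda)V$ for every $\lambda$.

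The genuine gap is in how you propose to extract the $|t|^{-n/2m}$ decay once Wiener inversion is in hand. You write that ``intermediate and high energies are controlled by decay of the free resolvent together with enough iterates of $R_0^\pm V$ to gain smoothness in $\lambda$,'' but the free resolvent kernel has no useful $L^1\to L^\infty$ decay in $\lambda$, and iteration does not by itself produce the time factor. In the paper the decay comes from integrating by parts in the Stone integral, writing $e^{-it\lambda^{2m}} = (-2mit\lambda^{2m-1})^{-1}\partial_\lambda e^{-it\lambda^{2m}}$ to gain powers of $t^{-1}$; the split is at the \emph{time-dependent} scale $\lambda\sim |t|^{-1/2m}$, not at fixed energy thresholds. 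Each integration by parts throws a $\lambda$-derivative onto $E(\lambda)=[\mR_V^+-\mR_V^-](\lambda^{2m})$, and the core of the argument (Theorem~\ref{thm:Elambdak}) is that a suitable extension of $\lambda^k\partial_\lambda^k E(\lambda)/\lambda^{n-2m}$ stays in $\mF\U_{L^\infty,L^1}$ for $k$ up to $\lfloor\tfrac{n-1}{2}\rfloor$. Because each $\partial_\lambda$ on a free resolvent shifts the spatial singularity $|x-y|^{-(n-2m)}$ by one power, this forces the Wiener inversion to be carried out simultaneously on the whole ladder of intermediate spaces $\Kato^\alpha$, $0\le\alpha\le n-2m$ (Proposition~\ref{prop:inverses}), a structural point your plan does not anticipate.

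A smaller issue: you propose proving the dispersive bound first for $V\in C_0$ and then extending by density in $\Kato$. The paper does use density of $C_0$, but only inside the verification of the Wiener-theorem hypotheses (translation continuity and tail decay of $\widehat{V\widetilde{\mR_0}}$); the final $L^1\to L^\infty$ estimate is not obviously stable under $\Kato$-norm limits, since the spectral assumptions and $P_{ac}(H)$ do not vary continuously with $V$ in any evident way.
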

	In particular as part of the proof of Theorem~\ref{thm:main} we obtain the fact that the operator $H$ has no resonances in $(0,\infty)$ under the assumption that there are no eigenvalues in $(0,\infty)$, see  Corollary~\ref{cor:no res} below.
	
	To motivate our second main result we note that the Fourier transform of the function $e^{i|\xi|^{2m}}|\xi|^{2m\alpha}$ on $\R^n$ is bounded  provided that  $-\frac{n}{2m}< \alpha\leq \frac{n}2-\frac{n}{2m}$; see  Lemma~\ref{lem:FT nl3m} below for a proof of this when $n=1$. By scaling and letting $\beta=\alpha+\frac{n}{2m}$, this implies the following family of dispersive estimates for $H_0=(-\Delta)^m$: 
	$$
	\||H_0|^{\beta-\frac{n}{2m}}e^{-itH_0}\|_{L^1\to L^\infty} \les |t|^{-\beta }, \,\,\,\,0<\beta\leq\frac{n}2.
	$$ 
	Dispersive estimates of this type (with $\beta=\frac{n}2$) were used by the authors in \cite{EGGcounter} in dimensions $n\geq 4m$ to construct counterexamples to the $L^p$ boundedness of the wave operators for insufficiently smooth potentials.  The obstruction in~\cite{EGGcounter} is not present in lower dimensions. 
	Here we prove 
	
	\begin{theorem}\label{thm:main2}
		
		Let $m\in \mathbb N$ be such that $2m<n<4m$.
		If $V\in \Kato$ is real-valued such that the operator $H=(-\Delta)^m+V$ has no eigenvalues on $[0,\infty)$ and no resonance at zero energy, then
		$$
		\||H |^{\beta-\frac{n}{2m}} e^{-itH} P_{ac}(H)\|_{L^1\to L^\infty} \les |t|^{-\beta },  
		$$
		provided that $0<\beta  \leq \frac{n}2$.
	\end{theorem}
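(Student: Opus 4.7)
The plan is to parallel the proof of Theorem~\ref{thm:main}, inserting the weight $\lambda^{\beta-\frac{n}{2m}}$ into the Stone's-formula representation of the Schr\"odinger flow and verifying that the adapted Wiener inversion argument still closes. By Stone's formula,
\begin{equation*}
|H|^{\beta-\frac{n}{2m}}e^{-itH}P_{ac}(H) = \frac{1}{2\pi i}\int_0^\infty \lambda^{\beta-\frac{n}{2m}}e^{-it\lambda}\bigl[R_V^+(\lambda)-R_V^-(\lambda)\bigr]\,d\lambda,
\end{equation*}
and the substitution $\lambda=E^{2m}$ converts $\lambda^{\beta-\frac{n}{2m}}d\lambda$ into $E^{2m\beta-1}dE$, putting the integral in the same form used in the proof of Theorem~\ref{thm:main} except for the extra spectral factor $E^{2m\beta-n}$.

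First I would observe that the $V=0$ case is exactly the Fourier transform bound recalled just before the theorem (with $\alpha=\beta-\frac{n}{2m}$, whose admissible range is precisely $0<\beta\leq\frac{n}{2}$), so only the perturbation needs to be analyzed. For this, I would expand $R_V^\pm = (I+R_0^\pm V)^{-1}R_0^\pm$ via the symmetric resolvent identity used in Theorem~\ref{thm:main}, and split the $E$-integral into low ($E\leq 1$) and high ($E\geq 1$) regimes. At high frequency the polynomial factor $E^{2m\beta-n}$ is tame: after rescaling $E\mapsto|t|^{-1/2m}E$ it produces precisely the factor $|t|^{\frac{n}{2m}-\beta}$ needed to upgrade the $|t|^{-\frac{n}{2m}}$ decay of Theorem~\ref{thm:main} to the target $|t|^{-\beta}$, and the Wiener-algebra / integration-by-parts machinery from that proof extends essentially unchanged once the admissibility of the extra weight is verified in the relevant operator algebra.

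The main obstacle is the low-frequency regime, because for $\beta<\frac{n}{2m}$ the weight $E^{2m\beta-n}$ is singular at the origin. Integrability is restored by the vanishing of $\Im R_V^+(E^{2m})$ at $E=0$: in the free case $\Im R_0^+(E^{2m})$ carries the spectral-density factor $E^{n-2m}$, so the integrand near zero behaves like $E^{2m\beta+n-2m-1}$, which is integrable for every $\beta>0$ since $n>2m$. The delicate point is transferring this cancellation to the perturbed problem uniformly in the kernel variables $x$ and $y$: one must show that under the standing assumption of no zero eigenvalue or resonance, the Neumann series for $(I+R_0^\pm V)^{-1}$ converges in the appropriate weighted Wiener algebra down to $E=0$, so that $\lambda^{\beta-\frac{n}{2m}}[R_V^+(\lambda)-R_V^-(\lambda)]$ remains absolutely integrable simultaneously for all $0<\beta\leq\frac{n}{2}$. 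Combining the resulting low- and high-frequency bounds with the free estimate then yields the claimed $|t|^{-\beta}$ decay.
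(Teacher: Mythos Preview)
Your outline has the right starting point (Stone's formula, resolvent identity, a low/high split) but several of the steps you label as routine are exactly where the paper's effort lies, and one of your mechanisms is wrong.

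First, the split should be at $\lambda\sim |t|^{-1/2m}$, not at $\lambda=1$; only a time-dependent cutoff lets both pieces produce the same power $|t|^{-\beta}$. More importantly, your claim that at high frequency the integration-by-parts machinery ``extends essentially unchanged once the admissibility of the extra weight is verified'' hides the central difficulty. To reach $\beta$ as large as $\tfrac{n}{2}$ one must integrate by parts up to $K=\lfloor\tfrac{n-1}{2}\rfloor$ times, and each derivative lands on $E(\lambda)=[\mR_V^+-\mR_V^-](\lambda^{2m})$. The resulting terms are products of the form $M_\ell\,[\partial_\lambda^{k_0}\mR_0]\prod_i M_r V[\partial_\lambda^{k_i}\mR_0]\,M_r$, and the kernels $\partial_\lambda^{k_i}\mR_0$ are no longer dominated by $|x-y|^{2m-n}$ but by smaller negative powers. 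Showing these products remain in $\mF\U_{L^\infty,L^1}$ requires passing through the intermediate Kato spaces $\Kato^\alpha$, $0\le\alpha\le n-2m$, and proving $M_r\in\mF\U_{\Kato^\alpha}$ for every such $\alpha$ (Proposition~\ref{prop:inverses}); this is the main technical theorem of the paper (Theorem~\ref{thm:Elambdak}) and is not a consequence of the $\beta=\tfrac{n}{2m}$ case. You also omit the substantial extra argument needed when $n$ is even and $\beta>\tfrac{n-1}{2}$, where one cannot gain the last half-derivative by naive integration by parts and a stationary-phase decomposition with the critical point $\lambda_0=(|x-y|/2mt)^{1/(2m-1)}$ is required.

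Second, your low-frequency step is based on a misconception: you say one must show ``the Neumann series for $(I+R_0^\pm V)^{-1}$ converges in the appropriate weighted Wiener algebra down to $E=0$.'' The Neumann series does \emph{not} converge in general---$\|V\|_{\Kato}$ need not be small---and this is precisely why an abstract Wiener inversion theorem is invoked. One instead verifies that $I+V\widetilde{\mR_0}(\lambda)$ is invertible in $\B(\Kato^\alpha)$ at every $\lambda\in\R$ (using Fredholm theory and the spectral hypotheses; Lemma~\ref{prop:L1inv}) together with the translation-continuity and tail-decay conditions of Theorem~\ref{thm:Wiener}, which then yield invertibility in the algebra $\U_{\Kato^\alpha}$. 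The vanishing of the spectral density at zero that you cite is used, but in the guise of the identity $\frac{[\mR_0^+-\mR_0^-](\lambda^{2m})}{\lambda^{n-2m}}=g(\lambda|x-y|)$ with $\widehat g\in L^1$, not via a convergent perturbation series.
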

	Note that Theorem~\ref{thm:main} is a special case of this when $\beta=\frac{n}{2m}\in (1,2)$.

	Dispersive estimates in the case of $m=1$ are well-studied in all dimensions.  See, for example, the recent survey paper \cite{SchlagSurvey}, and the references therein.
	When $m=1$ and $n=3$, dispersive bounds for scaling critical potentials were proven by Beceanu and the second author, \cite{BG}, by applying an operator-valued Wiener inversion theorem.  This type of inversion was first used by Beceanu in \cite{Bec}.  The operator-valued Wiener inversion theorem was later extended by Beceanu and the second author in \cite{BGwave}  to study a three-dimensional wave equation, and by Beceanu in \cite{BecWave} to study the wave operators in three dimensions.  Bui, Duong and Hong employed the Wiener inversion results in a further study of a three-dimensional wave equation.  Beceanu and Schlag further refined results on the structure of the wave operators in three dimensions, \cite{BS,BS2}.  Hill used the Wiener inversion method to study the case of $n=1$ and $m=1,2$, \cite{Hill}. We note that this is the first time this method has been used in even dimensions, or on higher order operators for scaling-critical potentials. We extend the Wiener inversion method in several directions to obtain these results. Namely, we utilize mappings between more generalized Kato spaces to accommodate the more complicated resolvent expansions in both even and odd dimensions for the polyharmonic case $m>1$. This also allows us to control higher order derivatives of the resolvents and to develop the necessary spectral theory.

	The literature on dispersive estimates for $m>1$ has blossomed recently.  First, local dispersive bounds were considered as maps between weighted $L^2$ spaces.  Feng, Soffer and Yao proved local dispersive bounds for fourth order Schr\"odinger operators, when $m=2$ and $n=3$ or $n>4$ in \cite{fsy}.  The third author and Toprak considered global, $L^1\to L^\infty$ estimates in the case of $n=4$, and the case of $n=3$ with the first author, \cite{GT4,egt}.  Global bounds were proven by Soffer, Wu and Yao when $n=1$, \cite{SWY}, and by Li, Soffer and Yao when $n=2$, \cite{LSY}. 
	
	Feng, Soffer, Wu and Yao proved local dispersive bounds when $n>2m$ in \cite{soffernew} assuming that $|V(x)|\les (1+|x|)^{-\delta}$ for some $\delta>n$.  Global dispersive bounds are established as a consequence of the first and third author's work on the $L^p$ boundedness of the wave operators provided $\delta>n+3$ when $n$ is odd, \cite{EGWaveOp}, and $\delta>n+4$  for $n$ even, \cite{EGWaveOp2}.  When $n\geq 4m$ some measure of smoothness is needed to ensure global bounds hold, see \cite{EGWaveOp,EGGcounter}.  In all cases, there is a substantial gap to the scaling-critical decay, which roughly corresponds to $\delta>2m$.  We also note that while earlier results require bounded potentials, our results allow for local singularities and establish the uniform global bounds.

	The paper is organized as follows.  First, in Section~\ref{sec:facts}, we collect necessary facts about the higher order Schr\"odinger  resolvent operators and Fourier transforms of related functions.  In Section~\ref{sec:Wiener} we recall the framework for an operator-valued Wiener theorem that we wish to use to establish the dispersive estimates.   In Section~\ref{sec:set up} we prove  Theorem~\ref{thm:main2} which includes  Theorem~\ref{thm:main} as a special case.   Finally, in Section~\ref{sec:wienerproof} we prove Proposition~\ref{prop:inverses} to ensure that the Wiener algebra machinery applies to the higher order Schr\"odinger operators and as a consequence establish facts about the spectral theory of $H$ with $V\in \Kato$.

	\section{Properties of the Resolvent}\label{sec:facts}
	
	Our analysis relies on a careful analysis of resolvent operators.  In this section we collect some needed facts about the resolvent operators and use them to prove results used in the proof of Theorem~\ref{thm:main2}.  Our work relies on understanding properties of the perturbed resolvents $\mR_V^\pm(z)=((-\Delta)^m+V-z)^{-1}$ where the `$+$' and `$-$', denote the limiting values of the resolvent as $z$ approaches $[0,\infty)$ from above and below respectively.
	
	We have the splitting identity for $z\in\C\setminus[0,\infty)$, (c.f. \cite{soffernew})
	\be\label{eqn:Resol}
	\mR_0(z)(x,y):=((-\Delta)^m -z)^{-1}(x,y)=\frac{1}{ mz^{1-\frac1m} }
	\sum_{\ell=0}^{m-1} \omega_\ell R_0 ( \omega_\ell z^{\frac1m})(x,y)
	\ee
	where $\omega_\ell=\exp(i2\pi \ell/m)$ are the $m^{th}$ roots of unity, $R_0(z)=(-\Delta-z)^{-1}$ is the usual ($2^{nd}$ order) Schr\"odinger resolvent. 
	Using the change of variables $z= \lambda^{2m}$ with $\lambda$ restricted to the sector in the complex plane with $0<\arg(\lambda)<\pi/m$,
	\be\label{eqn:Resolv}
	\mR_0( \lambda^{2m})(x,y):=((-\Delta)^m -\lambda^{2m})^{-1}(x,y)=\frac{1}{ m\lambda^{2m-2}}
	\sum_{\ell=0}^{m-1} \omega_\ell R_0 ( \omega_\ell  \lambda^2)(x,y).
	\ee
	By the well-known Bessel function expansions, for $n\geq 3$ odd we have
	\be\label{eqn:R0 explicit0}
	R_0(z^2)(x,y)=\frac{e^{iz |x-y|}}{|x-y|^{n-2}} \sum_{j=0}^{\frac{n-3}{2}} c_{n,j} |x-y|^j z^{j}, \qquad \Im(z)>0.
	\ee   
	Even dimensions are more complicated due to the appearance of logarithmic terms.  We need detailed information on the limiting resolvent operators $\mR_V^\pm(\lambda^{2m})$ as $\lambda$ approaches the positive half line.   
	
	In what follows we need to extend the resolvent to $\lambda \in \R$.  Recall that $R_0(z^2)$ has a logarithmic branch point at the origin.  By taking a branch cut on $\{re^{-i\pi/2m}:r\geq 0\}$ we can guarantee that ${\mR_0}(\lambda )(x,y)$ extends continuously on $\lambda\in\R$.
	However, on the negative half-line, the exponentials experience exponential growth in $\lambda$.  Using the splitting identity, \eqref{eqn:Resolv}, we insert $\eta(\lambda |x-y|)$, a smooth cut-off to the interval $(-1,\infty)$, on the exponentially decaying terms of the resolvent to form:  
	\be\label{breveR}
	\breve{\mR_0^\pm}(\lambda )(x,y)=\frac{1}{ m\lambda^{2m-2}}\bigg[ R_0^\pm(\lambda^2)(x,y)+
	\sum_{\ell=1}^{m-1} \eta(\lambda|x-y|) \omega_\ell R_0 ( \omega_\ell  \lambda^2)(x,y)\bigg].
	\ee

	We combine resolvent representations in \cite{EGWaveOp} with asymptotic expansions in \cite{soffernew} for the following representation of the resolvent.  We write $\la x\ra:=(1+|x|^2)^{\frac12}$ and $f(r)=\widetilde O(\la r\ra^{\beta})$ to denote that $|\partial_r^k f(r)|\leq \la r\ra^{\beta-k}$ for each $k=0,1,2,\dots$.
	
	\begin{prop}\label{prop:resolv G}
		In dimensions $2m<n<4m$, we have the representation
		$$
		\mR_0^\pm(\lambda^{2m})(x,y)=\frac{G^\pm(\lambda|x-y|)}{|x-y|^{n-2m}}=\frac{e^{\pm i \lambda |x-y|} F^\pm(\lambda |x-y|)}{|x-y|^{n-2m}}.
		$$
		When $n$ is odd, $G^\pm, F^\pm \in C^\infty(\R)$, and on $(-K,\infty)$  for any $K<\infty$, they satisfy 
		$|(F^\pm)^{(N)}(r)|\les \la r\ra^{\frac{n+1}{2}-2m -N}$,  $|(G^\pm)^{(N)}(r)|\les \la r\ra^{\frac{n+1}{2}-2m }$, $N=0,1,2,\dots.$
		In addition, the Taylor expansion of $G^\pm$ at zero is of the form $a_0+c_\pm r^{n-2m}+\cdots$.  
		Moreover, when $n=4m-1$,  $F^\pm(r)= c  +\widetilde O(\la r\ra^{ -1})$, where $c$ is the same for $+$ and $-$ cases.
		
		When $n$ is even,  $G^\pm, F^\pm$ are $ C^\infty$ away from the origin,  and the bounds above hold on $(-K,\infty)\setminus(-1,1)$. On $(-1,1)$,  we have $G^\pm, F^\pm\in C^{2m-1}$, and the bounds above hold for $N=0,1,2,\dots, 2m-1$. In addition, we   have the expansion
		$$
		G^\pm (r)= a_0+d_0^\pm r^{n-2m} + \cdots + d_{2m-2}^\pm r^{2m-2} + d_{2m} r^{2m}\log(|r|) + E^\pm(r),  
		$$
		where $E^\pm(r)$ is $C^{2m}$ and satisfies,  $|\partial_r^N E_\pm(r)|\les r^{2m-N}$ for each $0\leq N\leq 2m$.
	\end{prop}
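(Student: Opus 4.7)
The factorization $\mR_0^\pm(\lambda^{2m})(x,y) = G^\pm(\lambda|x-y|)/|x-y|^{n-2m}$ is forced by scaling: the operator $(-\Delta)^m - \lambda^{2m}$ is homogeneous of degree $-2m$ under $(x,\lambda) \mapsto (sx, s^{-1}\lambda)$, so the Green's function kernel must have the form $\lambda^{n-2m}\widetilde K(\lambda|x-y|)$, which coincides with $G(\lambda|x-y|)/|x-y|^{n-2m}$ upon setting $G(r) = r^{n-2m}\widetilde K(r)$. The detailed structure of $G^\pm$ is then read off by substituting explicit expressions for $R_0(\omega_\ell \lambda^2)$ into the splitting identity \eqref{eqn:Resolv} summand by summand.

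For odd $n$, set $\zeta_\ell = e^{i\pi\ell/m}$. Only the $\ell = 0$ term $R_0(\lambda^2)$ requires the $\pm i0$ prescription and contributes $e^{\pm i\lambda|x-y|}$; for $\ell \ge 1$ the argument $\omega_\ell\lambda^2$ lies off the positive real axis, so $R_0(\omega_\ell\lambda^2)$ is unambiguously analytic with $z = \zeta_\ell \lambda$ (with $\Im z > 0$). Writing $r = \lambda|x-y|$ and $P(w) = \sum_{j=0}^{(n-3)/2} c_{n,j} w^j$, substitution of \eqref{eqn:R0 explicit0} into \eqref{eqn:Resolv} yields
\begin{align*}
G^\pm(r) = \frac{1}{m\,r^{2m-2}} \Big[\, e^{\pm i r} P(\pm r) + \sum_{\ell=1}^{m-1} \omega_\ell\, e^{i\zeta_\ell r}\, P(\zeta_\ell r)\,\Big].
\end{align*}
Factoring out $e^{\pm i r}$ defines $F^\pm(r)$: the $\ell \ge 1$ summands pick up an extra factor $e^{i(\zeta_\ell \mp 1)r}$ whose modulus equals $e^{-\sin(\pi\ell/m)\, r}$ and decays exponentially on $(-K,\infty)$ in both branches. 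Since $\deg P = (n-3)/2$, this yields $|F^{\pm(N)}(r)| \lesssim \langle r\rangle^{(n+1)/2-2m-N}$, and differentiating $G^\pm = e^{\pm ir} F^\pm$ by the Leibniz rule gives $|G^{\pm(N)}(r)| \lesssim \langle r\rangle^{(n+1)/2-2m}$. In the critical case $n = 4m-1$ the polynomial $P$ has degree exactly $2m-2$, and its leading coefficient $c_{n,2m-2}$ is independent of the branch, so $F^\pm(r) = c + \widetilde O(\langle r\rangle^{-1})$ with $c$ common to both signs.

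The Taylor expansion of $G^\pm$ near $r = 0$ is cleanest through the scaling form $G(r) = r^{n-2m}\widetilde K(r)$: since the kernel of $(-\Delta)^{-m}$ on $\R^n$ with $n > 2m$ is a constant multiple of $|x-y|^{2m-n}$, one has $\widetilde K(r) = ar^{2m-n} + \widetilde K_{\rm reg}(r)$ with $\widetilde K_{\rm reg}$ holding the regular $\lambda$-dependent correction. Multiplying by $r^{n-2m}$ turns the leading singular term into the universal constant $a_0$ (independent of $\pm$, since $(-\Delta)^{-m}$ has no $\pm i0$ ambiguity), and the smooth correction produces $c_\pm r^{n-2m} + \cdots$ with branch jump inherited from the $\pm i0$ prescription in the $\ell = 0$ summand $R_0^\pm(\lambda^2)$; equivalently, expanding the exponentials $e^{i\zeta_\ell r}$ as power series shows that the apparent $r^{-(2m-2)}$ pole is removable by root-of-unity cancellation, and that $G^+ - G^-$ vanishes to order $r^{n-2m}$.

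In even dimensions the same strategy applies, but the second-order resolvent is expressed through the Hankel function $H^{(1)}_{(n-2)/2}(z|x-y|)$. Away from $w = 0$ its asymptotic $H^{(1)}_\nu(w) = \sqrt{2/(\pi w)}\, e^{i(w - \pi\nu/2 - \pi/4)}(1 + O(w^{-1}))$ has the same shape as in odd dimensions, so the bounds on $F^\pm, G^\pm$ extend verbatim to $(-K,\infty)\setminus(-1,1)$. Near $0$, however, the $Y$-Bessel component of $H^{(1)}_{(n-2)/2}$ contributes a $w^{(n-2)/2}\log w$ term, which after propagation through \eqref{eqn:Resolv} produces a single logarithmic contribution in $G^\pm$ at order $r^{2m}\log|r|$, preceded by polynomial terms $r^{n-2m}, \ldots, r^{2m-2}$ with $\pm$-dependent coefficients (inheriting the $\pm i0$ dependence from $R_0^\pm(\lambda^2)$ as before), and followed by a $C^{2m}$ remainder $E^\pm(r)$ satisfying $|\partial_r^N E^\pm(r)| \lesssim r^{2m-N}$; this forces the $C^{2m-1}$ regularity near $0$ and yields the stated expansion. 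The principal technical obstacle throughout is the bookkeeping required to verify these cancellations: one must confirm that the $\pm i0$ branch choice enters only through the $\ell = 0$ summand, identify the precise order $r^{n-2m}$ at which the two branches first disagree, and in even dimensions establish that the logarithmic contributions consolidate into the single $r^{2m}\log|r|$ term without propagating as $\log$-factors at other orders.
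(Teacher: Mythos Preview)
Your approach is essentially the same as the paper's: both feed the explicit second-order resolvent formulas (polynomial times exponential in odd dimensions, Hankel functions in even dimensions) through the splitting identity \eqref{eqn:Resolv}, read off the large-$r$ bounds on $F^\pm$ from the degree of $P$ and the exponential decay of the $\ell\ge 1$ summands, and extract the small-$r$ expansion from the known Taylor/Frobenius structure near the origin. The paper simply outsources the bookkeeping you sketch---the $F^\pm$ bounds to Lemmas~3.2 and~6.2 of \cite{EGWaveOp}, and the small-$r$ expansion (in particular the claim that in even dimensions the logarithm enters only at order $r^{2m}$) to Proposition~2.4 and equation~(2.7) of \cite{soffernew}---rather than redoing the root-of-unity cancellations by hand.
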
 
	\begin{proof}
		We drop  the $\pm$ signs in the proof and consider only the $+$ case, the $-$ case follows easily. 
		This representation is a refinement of Lemmas~3.2 and 6.2
		in \cite{EGWaveOp}, which considered odd and even dimensions respectively, adapted to considering the dispersive estimates.  Here we consider  the function $G$ without  extracting the oscillatory phase and its derivatives at zero in more detail.  The bounds on the derivatives of $F$ are in \cite{EGWaveOp}.  For $G$, the claims follow by writing $G(r)=e^{ ir}F(r)$ and applying the bounds for $F$.  By combining the phase with $F$, we note that the large $r$ decay of $G$ is slower for derivatives.   
		For odd $n$, we take advantage of the explicit formulas   \eqref{eqn:R0 explicit0} and the  splitting identity to see
		\begin{align*}
			G(\lambda|x-y|) &= |x-y|^{n-2m}\mR_0(\lambda^{2m})(x,y) =  \frac{|x-y|^{n-2m}}{ m\lambda^{2m-2}}\big[R_0 ( \lambda^2)(x,y)+
			\sum_{\ell=1}^{m-1} \omega_\ell R_0 ( \omega_\ell \lambda^2)(x,y)\big]\\
			&=  \frac{e^{i\lambda|x- y|}}{ m(\lambda|x-y|)^{2m-2}}\big[P_{\frac{n-3}2} (\lambda|x-y|)   +
			\sum_{\ell=1}^{m-1} \omega_\ell e^{i(\omega_\ell^{\f12}-1)\lambda|x-y|} P_{\frac{n-3}2} (\omega_\ell^{\f12} \lambda |x-y| )   \big] 
		\end{align*}
		Here $P_{k}(s)$ indicates a polynomial of degree $k$ in $s$.
		This also implies the final claim for $F$ when $n=4m-1$. Note that $c$ is independent of the $\pm$ signs because $\frac{n-3}2=2m-2$ is even. 
		
		The claim on the Taylor expansion at zero can be obtained by re-writing the expansions in Proposition 2.4 of \cite{soffernew}, see (2.6), one has:
		$$
		G(r)= a_0+\sum_{j=0}^m c_j r^{n-2m+2j}+E(r)
		$$
		where $E(r)$ has a zero of order $2m+1$ in $r$ and $a_0,c_j$ are constants.  Since $2m<n<4m$, we see that the $G(r)$ has a leading order  behavior   $a_0+cr^{n-2m}$. 
		
		For even $n$, the expansion on $(-1,1)$ follows from equation (2.7) in \cite{soffernew}.
	\end{proof}
	By construction, $\breve{\mR_0}(\lambda)$   have the same properties as $\mR_0(\lambda^{2m})$ in Proposition~\ref{prop:resolv G} and Lemma~\ref{lem:GFFourier} below  for $\lambda\in\R$.  Therefore, by a slight abuse of notation, we will use $G$ and $F$ to denote
	$$
	G(\lambda |x-y|)=|x-y|^{n-2m}\breve{\mR_0}(\lambda )(x,y),\,\,\,\,\,F(\lambda |x-y|)=|x-y|^{n-2m}e^{-i\lambda|x-y|}\breve{\mR_0}(\lambda )(x,y).
	$$
	Using Proposition~\ref{prop:resolv G} we obtain the following bounds on the Fourier transform of the function $G$ and its derivatives.  In the lemma below for $\gamma\notin \mathbb Z$ we define $r^{\gamma}$ by analytic continuation with a branch cut on the negative imaginary axis.	
	\begin{lemma}\label{lem:GFFourier}
		For any $k=1,2,...$ if $n$ is odd, or  $k=1,2,\ldots, \tfrac{n}2 $ if $n$ is even, and any $\gamma$ such that $\min(0,k+2m-n)\leq \gamma< 2m-\frac{n+1}2 $,  we have
		$$
		\mF\big( r^\gamma G^{(k)}(r) \big)\in L^1(\R).
		$$
		Moreover, the Fourier transform is a finite measure if $\gamma=2m-\frac{n+1}2 $.  
	\end{lemma}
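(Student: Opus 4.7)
The plan is to exploit the factorization $G(r) = e^{ir}F(r)$ from Proposition~\ref{prop:resolv G}, so that $G^{(k)}(r) = e^{ir}\tilde F_k(r)$ with $\tilde F_k(r) := \sum_{j=0}^k \binom{k}{j}i^{k-j}F^{(j)}(r)$. Since multiplication by $e^{ir}$ merely translates the Fourier transform (and preserves the $L^1$-norm or finite-measure property), the problem reduces to showing $\mF(r^\gamma \tilde F_k(r)) \in L^1(\R)$, or is a finite measure when $\gamma = 2m-(n+1)/2$. The relevant inherited bounds are $|\tilde F_k(r)| \les \langle r\rangle^{(n+1)/2-2m}$ and $|\tilde F_k^{(N)}(r)| \les \langle r\rangle^{(n+1)/2-2m-N}$ for $N \ge 1$; derivatives decay one extra power faster because they annihilate the leading constant asymptotic that $F$ may carry when $n = 4m-1$.

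I would next localize with a smooth cutoff $\chi \in C_c^\infty(\R)$ equal to $1$ near zero. For the near-zero piece $\chi(r) r^\gamma \tilde F_k(r)$, the hypothesis $\gamma \ge \min(0,k+2m-n)$ is exactly the condition ensuring boundedness at zero in view of the leading behavior $r^{\max(0,n-2m-k)}$ of $G^{(k)}$. In even dimensions the $r^{2m-k}\log|r|$ correction from Proposition~\ref{prop:resolv G} is controlled thanks to the restriction $k \le n/2 < 2m$. After subtracting off the finitely many explicit model terms in the local expansion (pure powers and, in even dimensions, a log correction), whose Fourier transforms are explicit and lie in $L^1$, the smooth remainder lies in some $H^s(\R)$ with $s > 1/2$, and the standard embedding $H^s(\R) \hookrightarrow \mF^{-1}(L^1(\R))$---a Cauchy--Schwarz consequence $\int|\hat\phi|\,d\xi \le \|\langle\xi\rangle^{-s}\|_{L^2} \|\langle\xi\rangle^s \hat\phi\|_{L^2}$---handles this piece.

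The far piece $\psi(r) := (1-\chi(r))\, r^\gamma \tilde F_k(r)$ I would analyze in two zones. For $|\xi|\ge 1$, two integrations by parts give $|\hat\psi(\xi)| \les |\xi|^{-2}$: boundary terms at infinity vanish because $\gamma+(n+1)/2-2m<0$, and $\psi''\in L^1(\R)$ since $|\psi''(r)| \les \langle r\rangle^{\gamma+(n+1)/2-2m-2}$. For $|\xi|$ bounded, $\psi$ may fail to be in $L^1(\R)$, so I would extract the explicit leading asymptotic $\tilde F_k(r) \sim c\, r^{(n+1)/2-2m}$ at infinity (readable from the explicit resolvent representation behind Proposition~\ref{prop:resolv G}, where the non-decaying contribution comes from the $\ell=0$ term and the $\ell\ne 0$ terms are exponentially suppressed on $r>0$). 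The contribution $c\,(1-\chi(r))\, r^{\gamma+(n+1)/2-2m}$ has an explicit homogeneous-distribution Fourier transform with singularity $|\xi|^{2m-(n+1)/2-\gamma-1}$ at the origin, which is locally $L^1$ precisely because $\gamma < 2m-(n+1)/2$; the remainder $r^\gamma(\tilde F_k(r) - c\, r^{(n+1)/2-2m})$ decays one power faster and is handled either by iterating the asymptotic extraction or by a direct $L^2$--Plancherel bound.

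In the critical case $\gamma = 2m-(n+1)/2$, the leading model $c\, r^{\gamma+(n+1)/2-2m} = c$ does not decay at infinity; its Fourier transform contributes a Dirac mass at $\xi = 0$, and together with the $L^1$ remainder the full Fourier transform is a finite measure rather than an $L^1$ function. The main obstacle is the borderline regime just below the critical $\gamma$, where $\psi \notin L^1(\R)$ and $\hat\psi$ near $\xi = 0$ is only defined distributionally; separating the explicit singular Fourier contribution from a better-behaved remainder---while simultaneously managing the even-dimensional log corrections in the local expansion of $G^{(k)}$---is the most delicate part of the argument.
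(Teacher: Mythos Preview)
Your overall strategy---localize near and far from the origin, exploit the factorization $G=e^{ir}F$ from Proposition~\ref{prop:resolv G}, and in the borderline case $\gamma=2m-\tfrac{n+1}{2}$ peel off the nondecaying constant to produce a Dirac mass---matches the paper's. The essential difference is in the technical engine. The paper packages both the near-zero and far-from-zero analyses into a single auxiliary result, Lemma~\ref{lem:fhat L1}: for the far piece one only needs $|f^{(j)}(r)|\les|r|^{-j-\epsilon}$ for $j=0,1$, and a clever integration by parts split at $r=1/\rho$ handles small and large $\rho$ simultaneously; for the near-zero piece one only needs continuity, compact support, and $|f'(r)|\les|r|^{-\epsilon}$ with $\epsilon<1$. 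With that lemma in hand the paper applies it directly to $(1-\chi)r^\gamma\tilde F_k$ and to $\chi\,r^\gamma G^{(k)}$ without ever subtracting off model terms or computing Fourier transforms of homogeneous distributions.

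Your route---two integrations by parts for $|\xi|\ge1$, explicit extraction of the leading asymptotic $c\,r^{(n+1)/2-2m}$ for small $|\xi|$, and term-by-term subtraction of pure powers and log corrections near $r=0$ followed by an $H^s\hookrightarrow\mF^{-1}L^1$ embedding---is correct, but it requires you to verify separately that each compactly supported model term $\chi(r)r^\alpha$ and $\chi(r)r^\alpha\log|r|$ has Fourier transform in $L^1$, and to justify the distributional Fourier transform of $(1-\chi)r^\alpha$ near $\xi=0$ (including the logarithmic case $\alpha=-1$). Each of these checks is, in effect, a special case of Lemma~\ref{lem:fhat L1}. The paper's approach therefore buys uniformity and brevity: one criterion replaces the several explicit computations you would need, and it sidesteps the ``borderline just below critical $\gamma$'' delicacy you flag at the end.
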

	\begin{proof}  
		We first consider the case when $n$ is odd.  Note that for $|r|\les 1$, $G^{(k)}(r)=  r^{-\min(0,k+2m-n)}   E_k(r),$ for some $E_k\in C^2$. Therefore, the Fourier transform of
		$r^\gamma G^{(k)}(r) \chi(r)$ is in $L^1$ by the second part of Lemma~\ref{lem:fhat L1}. 
		For $|r|\gtrsim 1$, we have 
		$$
		r^\gamma G^{(k)}(r) = e^{ir} \sum_{j=0}^k c_{j,k}  r^\gamma  F^{(j)}(r) .
		$$
		By Proposition~\ref{prop:resolv G}, we have 
		$$
		\Big|\Big(\sum_{j=0}^k c_{j,k} r^\gamma  F^{(j)}(r) \Big)^{(N)}\Big|\les \la r\ra^{\frac{n+1}{2}-2m -N+\gamma}.
		$$
		When $\gamma<2m-\frac{n+1}{2}$,   using the first part of Lemma~\ref{lem:fhat L1} below, the Fourier transform of this function is in $L^1$.   
		When $\gamma=2m-\frac{n+1}{2}$, the leading contribution from the sum above comes from the $j=0$ term. We get $c e^{ir} + e^{ir}\widetilde O(\la r \ra^{-1})$. Hence, the Fourier transform is a sum of an  $L^1$ function and a Dirac-$\delta$. 
		
		When $n$ is even, the proof above is valid for $|r|\gtrsim 1$ when $\gamma<2m-\frac{n+1}{2}$. The case of $\gamma=2m-\frac{n+1}{2}$ requires slightly more care.  Here we recall the representation of the second order resolvent in terms of Bessel functions,
		$$
		R_0^+(\lambda^2)(x,y)=\frac{i}{4} \bigg( \frac{\lambda}{2\pi |x-y|} \bigg)^{\frac{n-2}{2}} H^{(1)}_{\frac{n-2}{2}}(\lambda |x-y|)
		$$
		By the splitting identity, \eqref{eqn:Resolv}, we have
		$$
		|x-y|^{2m-n}\mR_0^+(\lambda^{2m})(x,y)
		=\frac{C_n}{(\lambda|x-y|)^{2m-\frac{n+2}{2}}} \sum_{\ell=0}^{m-1} \omega_\ell H^{(1)}_{\frac{n-2}{2}}( \omega_\ell^{\frac12}  \lambda|x-y| )
		$$
		Hence we may write
		$$
		G(r)=\frac{C_n}{r^{2m-\frac{n+2}{2}}} \sum_{\ell=0}^{m-1} \omega_\ell H^{(1)}_{\frac{n-2}{2}}(\omega_\ell^{\frac12}  r )
		$$
		The Bessel function has an expansion of the form (c.f. p.364 of \cite{AS})) as $|z|\to \infty$
		$$
		H_{\frac{n-2}{2}}^{(1)}(z)=e^{i(z-\frac{n+1}{4})}\sqrt{\frac{2}{\pi z}}\, \bigg(1+ \mathcal E(z)\bigg)
		$$
		where $|\mathcal E^{(N)}(z)|\les z^{-\frac{3}{2}-N}$ for all $N=0,1,2,\dots$.  Above we consider the same branch used in defining $r^\gamma$.  By the exponential decay when $\ell\neq 0$, it suffices to consider the case of $\omega_0=1$:
		$$
		r^\gamma G_0^{(k)}(r)=\sum_{j=0}^k \frac{c_{n,k}}{r^{j-\frac12}}  e^{i(r-\frac{n+1}{4})}\sqrt{\frac{2}{\pi r}} \bigg(1+ \mathcal E(r)\bigg)
		$$
		As in the $n$ odd case, the leading contribution is from the $j=0$ term.  Again, we have $ce^{ir}+\widetilde O(\la r\ra^{-1}) $ which suffices to prove the claim.
		
		For $|r|\les 1$, we have 
		$$
		r^\gamma G^{(k)}(r) =  c_0 r^{-\min(0,k+2m-n)+\gamma} + \cdots + \tilde c_{2m}  r^{2m-k+\gamma} + c_{2m+1} r^{2m-k+\gamma}\log(|r|) + r^\gamma E^{(k)}(r)  .
		$$
		The terms before the logarithm are controlled by the second part of Lemma~\ref{lem:fhat L1} by the choice of $\gamma$.  We now turn to the logarithmic term.
		In the worst case (when    $\gamma=\min(0,k+2m-n)$), the logarithmic term is $r^{\min(2m-k,4m-n)}\log(|r|)$, which is at worst  $r \log(|r|)$ (when $k=\tfrac{n}2=2m-1$), which is also controlled by Lemma~\ref{lem:fhat L1}. Finally we note that $|E^{(k)}(r)| \les r^{2m-k}$ and may be differentiated at least one time with $|E^{(k+1)}(r)| \les r^{2m-k-1}$ by Proposition~\ref{prop:resolv G} noting that $k+1\leq \frac{n}{2}+1\leq 2m$. Therefore, $|r^\gamma E^{(k)}(r)| \les r^{\min(2m-k,4m-n)} $, and its derivative is bounded and again Lemma~\ref{lem:fhat L1} applies. 
	\end{proof}
	
	\begin{lemma}\label{lem:fhat L1}
		If $f$ is  $C^1$, supported on $\R\setminus (-1,1)$,   and $|f^{(j)}(r)|\les |r|^{-j-\epsilon}$ for some $\epsilon>0$ and $j=0,1 $, then $\widehat{f}$ is in $L^1(\R)$. 
		
		Furthermore, if $f$ is continuous and compactly supported in $(-2,2)$ and its distributional derivative is a function satisfying  $|f'(r)|\les |r|^{-\epsilon}$ for some $\epsilon<1$, then $\widehat{f}$ is in $L^1(\R)$.
	\end{lemma}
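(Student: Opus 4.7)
The approach is to handle both parts by integrating by parts once, writing $\widehat{f}(\xi)=\widehat{f'}(\xi)/(i\xi)$, and then estimating the resulting integrand separately on the regimes $|\xi|\le 1$ and $|\xi|\ge 1$.

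For the first claim, since $f\in C^1(\R)$ vanishes on $(-1,1)$ we have $f(\pm 1)=f'(\pm 1)=0$, so no boundary terms arise in the integration by parts. The hypothesis $|f'(r)|\les|r|^{-1-\epsilon}\mathbf{1}_{|r|\ge 1}$ places $f'$ in $L^1\cap L^2(\R)$, so $\widehat{f'}\in L^\infty\cap L^2$. On $|\xi|\ge 1$, Cauchy--Schwarz against $|\xi|^{-1}\in L^2(|\xi|\ge 1)$ gives a finite contribution. For $|\xi|\le 1$, I would exploit the cancellation $\widehat{f'}(0)=\int_{\R}f'\,dr=0$, which follows from the fundamental theorem of calculus together with the vanishing of $f$ at $\pm\infty$ and at $\pm 1$. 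Combining $|e^{-ir\xi}-1|\le\min(2,|r\xi|)$ with $|f'(r)|\les|r|^{-1-\epsilon}$ and splitting the $r$-integral at $|r|=1/|\xi|$ yields $|\widehat{f'}(\xi)|\les|\xi|^{\min(\epsilon,1)}$, with at most a logarithmic factor when $\epsilon=1$; since $\epsilon>0$, this makes $\int_{|\xi|\le 1}|\widehat{f'}(\xi)|/|\xi|\,d\xi$ finite.

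For the second claim, continuity and compact support of $f$ give $\widehat{f}\in L^\infty$, so $\int_{|\xi|\le 1}|\widehat{f}(\xi)|\,d\xi<\infty$ is immediate. The bound $|f'(r)|\les|r|^{-\epsilon}$ with $\epsilon<1$ makes $f'$ locally integrable, $f$ is then absolutely continuous, and the vanishing of $f$ outside $(-2,2)$ kills the boundary contribution, producing $\widehat{f}(\xi)=\widehat{f'}(\xi)/(i\xi)$ once more. Since $\epsilon<1$ and $f'$ has compact support, I can select $p\in(1,2]$ with $p\epsilon<1$, so that $f'\in L^p(\R)$; Hausdorff--Young then gives $\widehat{f'}\in L^{p'}$, and H\"older's inequality with conjugate exponents $(p',p)$ yields
\[
\int_{|\xi|\ge 1}\frac{|\widehat{f'}(\xi)|}{|\xi|}\,d\xi \le \|\widehat{f'}\|_{L^{p'}}\bigg(\int_{|\xi|\ge 1}|\xi|^{-p}\,d\xi\bigg)^{1/p}<\infty,
\]
the right-hand integral being finite precisely because $p>1$.

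The main difficulty, confined to Part 1, is the low-frequency estimate: one cannot simply bound $\widehat{f}$ by $\|f\|_{L^1}$ since $f$ itself need not be integrable when $\epsilon\le 1$. Integration by parts shifts the task to a bound for $\widehat{f'}/\xi$, and the $|\xi|^{-1}$ singularity introduced at the origin is absorbed by the H\"older continuity of $\widehat{f'}$ at $0$, which in turn is the payoff of the moment cancellation $\widehat{f'}(0)=0$. The remaining work is a routine pairing between $L^q$ norms of $\widehat{f'}$ and the integrability of $|\xi|^{-1}$ in the dual $L^{q'}$ space.
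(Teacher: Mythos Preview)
Your proof is correct and follows essentially the same route as the paper: high frequencies via $f'\in L^2$ (Part~1) or Hausdorff--Young with a suitable $p\in(1,2]$ (Part~2), and low frequencies by a direct pointwise bound. Your use of the moment cancellation $\widehat{f'}(0)=0$ in Part~1 is a slight repackaging of the paper's direct splitting of the Fourier integral at $r=1/|\xi|$, but the underlying computation and resulting bound $|\widehat f(\xi)|\les|\xi|^{\min(\epsilon,1)-1}$ are the same.
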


	\begin{proof} 
		Since $f \in \dot H^1$, $\widehat{f}(\rho) =|\rho|^{-1}g(\rho)$, where $g\in  L^2$. Therefore $\widehat f$ is integrable on $|\rho|\gtrsim 1$.  It remains to prove that $\widehat{f}$ is integrable as $|\rho|\to 0$. Note that by an integration by parts (taking $\rho , r >0$)
		$$
		|\widehat f(\rho)| =\Big|\int_1^{\frac1\rho} f(r) e^{-ir\rho} dr+\frac{1}{ie^i\rho} f(1/\rho) -\int_{\frac1\rho}^\infty f^\prime(r) \frac{e^{-ir\rho}}{-i\rho} dr \Big|
		$$ 
		$$
		\les  \rho^{\epsilon-1}+\int_{1}^{\frac1\rho} r^{-\epsilon} dr+\int_{\frac1\rho}^\infty \frac{1}{r^{1+\epsilon} \rho} dr
		\les  \rho^{\epsilon-1}.$$
		This yields the first claim.  
		
		For the second claim, we first note that $\widehat f\in L^2\cap L^\infty$ by the first assumption.  If $\epsilon<\frac12$, then $f\in H^1$ and hence $\widehat f\in L^1$.  We now assume that $\frac{1}{2} \leq \epsilon<1$.  Fix $p$ such that $1<p<\frac{1}{\epsilon}\leq 2$, then $f'\in L^p$.    By Hausdorff-Young, we have $\widehat{f'}\in L^{p'}$.  Further, since  $\widehat{f}$ is also bounded, $\la \xi\ra\widehat{f}(\xi)\in L^{p'}$.  Now, we note
		$$
		\int |\widehat f(\xi)|\, d\xi \leq  \int  |\widehat f(\xi)| \frac{ \la \xi\ra}{\la \xi\ra}\, d\xi \les   \| \widehat f(\xi) \la \xi\ra\|_{p'} \| \la \xi\ra^{-1} \|_{p}<\infty.
		$$
	\end{proof}

	We make some elementary observations about how the Kato norms are related.
	\begin{proposition} \label{prop:KatoMaps}
		For all $x, y, z \in \R^n$, and $0 \leq \alpha \leq \beta \leq \gamma$,
		\[
		\frac{1}{|x-z|^\beta|z-y|^{\gamma +\alpha - \beta}} \les \frac{1}{|x-y|^\alpha}\Big(\frac{1}{|x-z|^\gamma} + \frac{1}{|z-y|^\gamma}\Big).
		\]
		As a consequence, for any $0 \leq \alpha \leq \beta \leq n-2m$ and $V \in \Kato^{n-2m}$,
		\begin{equation} \label{eqn:KatoMaps}
			\Big\| V(\cdot) \int_{\R^n} \frac{f(y)}{|\,\cdot\,-y|^{n-2m + \alpha - \beta}}dy \Big\|_{\Kato^\beta} \les \|V\|_{\Kato^{n-2m}} \|f\|_{\Kato^\alpha}.
		\end{equation}
	\end{proposition}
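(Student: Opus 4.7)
The plan is to prove the pointwise inequality first by a simple case analysis based on which of the two quantities $|x-z|$ and $|z-y|$ dominates, and then deduce the $\Kato$-norm estimate by an application of Fubini.

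For the pointwise bound, the triangle inequality $|x-y|\le |x-z|+|z-y|$ is all we need, invoked in one of two ways depending on the relative sizes. First, suppose $|x-z|\le |z-y|$. Then $|x-y|\le 2|z-y|$, so $|z-y|^{-\alpha}\les |x-y|^{-\alpha}$, and using $\beta\leq\gamma$ together with $|x-z|\leq|z-y|$ we get
\[
\frac{1}{|x-z|^{\beta}|z-y|^{\gamma+\alpha-\beta}}= \frac{1}{|z-y|^{\alpha}}\cdot\frac{1}{|x-z|^{\beta}|z-y|^{\gamma-\beta}} \les \frac{1}{|x-y|^{\alpha}|x-z|^{\gamma}}.
\]
In the complementary case $|z-y|<|x-z|$ we have $|x-y|\le 2|x-z|$, and then, using $\alpha\le\beta$ with $|z-y|<|x-z|$,
\[
\frac{1}{|x-z|^{\beta}|z-y|^{\gamma+\alpha-\beta}}= \frac{1}{|x-z|^{\alpha}}\cdot\frac{1}{|x-z|^{\beta-\alpha}|z-y|^{\gamma+\alpha-\beta}} \les \frac{1}{|x-y|^{\alpha}|z-y|^{\gamma}}.
\]
Adding the two bounds gives the stated inequality.

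For the second claim, write out the $\Kato^\beta$ norm of the product and bring the sum under a single double integral:
\[
\Big\|V(\cdot)\int \frac{f(y)}{|\cdot-y|^{n-2m+\alpha-\beta}}\,dy\Big\|_{\Kato^\beta}=\sup_{w}\int\!\!\int \frac{|V(x)|\,|f(y)|}{|x-w|^{\beta}|x-y|^{n-2m+\alpha-\beta}}\,dy\,dx.
\]
Apply the pointwise inequality with the identifications $p=w$, $r=x$, $q=y$ and $\gamma=n-2m$ (which is consistent with the hypothesis $0\le\alpha\le\beta\le n-2m$) to obtain
\[
\frac{1}{|x-w|^{\beta}|x-y|^{n-2m+\alpha-\beta}}\les \frac{1}{|w-y|^{\alpha}}\Big(\frac{1}{|x-w|^{n-2m}}+\frac{1}{|x-y|^{n-2m}}\Big).
\]
Insert this bound and apply Fubini to each resulting term. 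In the first term the $x$-integral yields at most $\|V\|_{\Kato^{n-2m}}$ (by fixing $y=w$ in the Kato supremum) and the $y$-integral yields $\|f\|_{\Kato^\alpha}$ after fixing $w$. In the second term, the $x$-integral at fixed $y$ gives $\|V\|_{\Kato^{n-2m}}$ directly, and what remains is $\int |f(y)|/|w-y|^\alpha\,dy\le\|f\|_{\Kato^\alpha}$. Taking the supremum in $w$ completes the proof.

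There is no real obstacle here; the content of the pointwise inequality is essentially a two-case triangle inequality, and the norm estimate is then a mechanical application of Fubini. The one thing to be mindful of is verifying that the exponents used in the case analysis (namely $\beta-\alpha\ge 0$, $\gamma-\beta\ge 0$, and $\gamma+\alpha-\beta\ge\alpha$) are all nonnegative under the stated ordering $0\le\alpha\le\beta\le\gamma$, so that the monotonicity steps $|x-z|^{-(\beta-\alpha)}\le|z-y|^{-(\beta-\alpha)}$ and $|z-y|^{-(\gamma-\beta)}\le|x-z|^{-(\gamma-\beta)}$ go in the correct direction.
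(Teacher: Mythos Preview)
Your proof is correct and follows essentially the same approach as the paper: a case analysis for the pointwise inequality, then Fubini for the norm bound. Your two-case split (comparing $|x-z|$ with $|z-y|$) is in fact a slight streamlining of the paper's three-region decomposition (comparing each of $|x-z|$, $|z-y|$ with $\tfrac12|x-y|$); both arrive at the same conclusion with no additional ideas required.
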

	
	The first inequality can be verified by examining the regions where $|x-z| < \frac12|x-y|$, where $|z-y| < \frac12|x-y|$ and where $|x-z|, |z-y| \geq \frac12|x-y|$. The norm bound then follows by applying Fubini's theorem and using the pointwise inequality to dominate $\int_{\R^n} \frac{V(z)}{|x-z|^\beta |z-y|^{n-2m+\alpha - \beta}}dz$ by $|x-y|^{-\alpha}$.

	Our goal is to understand the perturbed resolvent operators $\mR_V^\pm$.  We do this by relating the perturbed resolvent to the free resolvents, specifically by applying the resolvent identity, we may write
	\begin{align}\label{eq:resolvent id}
		\mR_V^\pm (\lambda^{2m})&=(I+\mR_0^\pm (\lambda^{2m})V)^{-1}\mR_0^\pm (\lambda^{2m})\\
		&=\mR_0^\pm(\lambda^{2m})(I+V\mR_0^\pm(\lambda^{2m}))^{-1}.\nn
	\end{align}
	To employ the Wiener inversion machinery for the   operators $(I+  \mR_0^\pm(\lambda^{2m})V ) $ and $(I+ V\mR_0^\pm(\lambda^{2m}) ) $, we need another extension of $\mR_0^\pm(\lambda^{2m})$  to the real line in $\lambda$. We cannot use $\breve \mR_0(\lambda)$ here since the spectral assumptions are not necessarily valid for $I+V\breve \mR_0(\lambda)$ when $\lambda<0$.
	To this end, we extend the free  resolvents as follows:
	\begin{align}\label{eq:resolvent extns}
		\widetilde{\mR_0}^\pm(\lambda)=\left\{
		\begin{array}{ll}
			\mR_0^\pm(\lambda^{2m}) & \lambda \geq 0\\
			\mR_0^\mp ((-\lambda)^{2m}) & \lambda<0
		\end{array}
		\right.
	\end{align}
	With this extension the spectral assumptions of Theorem~\ref{thm:main2} can be used to show that $(I+V\widetilde{\mR_0}^\pm(\lambda) ) $ and $(I+\widetilde{\mR_0}^\pm(\lambda)V) $ are invertible for all $\lambda\in \R$.  For notational convenience, we often omit the $\lambda$ dependence of these operators as well as the $\pm$ signs.
	To keep the different extensions used clear, we also introduce the notation 
	\be\label{MrMl}
	M_\ell:=(I+\widetilde{\mR_0}  V)^{-1},\quad\quad M_r:=(I+V\widetilde{\mR_0}  )^{-1}.
	\ee
	To establish the results in Theorems~\ref{thm:main} and \ref{thm:main2}, we show that the inverse operators above are well-defined in an appropriate sense.  The next section explores these operators in depth.

	\section{An Abstract Wiener Theorem} \label{sec:Wiener}
	
	The operator-valued Wiener inversion theorem we will use is an application of the
	one proved in \cite{BG}. In particular, $I + V\widetilde{\mR_0}^\pm(\lambda)$ is a function of $\lambda$ taking values in $\B(L^1(\R^n))$, the set of bounded linear operators on $L^1(\R^n)$. Its Fourier transform with respect to $\lambda$ (and with $\rho$ as the dual variable) will be integrable in the sense described below.
	
	Given a Banach lattice $X$ of functions on $\R^n$, let the space $\U_X$ consist of Borel measures $T$ supported on $\R^{1+2n}$ for which the marginal distribution of $|T|$, formally written as
	\[
	M(T)(x,y) = \int_\R |T(\rho, x, y)|\,d\rho
	\]
	and more precisely defined by
	\[
	M(T)(E) := |T|(\R \times E) \text{ for all Borel subsets $E \subset \R^{2n}$,}
	\]
	defines a bounded integral operator on $X$.  The natural norm on this space is
	\[
	\| T \|_{\U_X} := \| M(T)\|_{\B(X)}.
	\]
	
	With $\mathcal M$ denoting the finite complex-valued measures on $\R$, we note that
	elements of $\U_X$ also act as bounded operators on $X_x \mathcal M_\rho$.  Here $X_x \mathcal M_\rho$ is the compound space of measures $\nu$ on $\R\times\R^n$ for which $M(x)=\|\nu(\cdot, x)\|_{\mathcal M}$  is finite for a.e. $x\in \R^n$ and  belongs to the Banach space $X$.  Elements of $\U_X$ act on this space
	through the formal convolution
	\[
	TF(\rho, x) = \int_{\R^n} \int_\R T(\rho-\sigma, x, y)F(\sigma, y) \,d\sigma dy.
	\]
	This equips $\U_X$ with the structure of a unital Banach Algebra whose identity element $\mathbbm 1$ is given by the measure
	$\delta_{\rho=0} \otimes \delta_{x=y}$.
	
	One may define the Fourier transform of $T \in \U_X$ with respect to the $\rho$
	variable by
	\[
	\widehat{T}(\lambda, x, y) := \int_\R e^{-i\lambda \rho}T(\rho, x, y)\,d\rho.
	\]
	For each $\lambda \in \R$, the kernel $\widehat{T}(\lambda, x,y)$ is dominated pointwise by
	$M(T)$, hence $\|\widehat{T}(\lambda)\|_{\B(X)} \leq \|T\|_{\U_X}$.
	
	Given two distinct Banach lattices $X$ and $Y$ of functions on $\R^n$, we define $\U_{X,Y}$ as in~\cite{BG} to be the set of Borel measures $T$ supported on $\R^{1+2n}$ such that
	$M(T)$ defines a bounded integral operator from $Y$ to $X$. For the same reason as above,
	$\|\widehat{T}(\lambda)\|_{\B(Y,X)} \leq \|T\|_{\U_{X,Y}}$.  It is straightforward to show that the expected algebraic relations, 
	\be\label{ST}
	\|ST\|_{\U_{X,Z}} \leq \|S\|_{\U_{X,Y}} \|T\|_{\U_{Y,Z}} \quad \text{and} \quad
	(\widehat{ST})(\lambda) = \widehat{S}(\lambda)\widehat{T}(\lambda),
	\ee
	both hold in this degree of generality.
	We also write  $\widehat T(\lambda)\in \mF\U_{X,Y}$ if $T\in \U_{X,Y}$.

	A key step in our proof of the dispersive bounds is to show that $(I + V\widetilde{\mR_0}^\pm(\lambda))^{-1}\in \mathcal F\U_{L^1}$, that is it has a Fourier transform in $\U_{L^1}$.  It follows by duality that
	$(I + \widetilde{\mR_0}^\pm(\lambda)V)^{-1}$ belongs to $\mathcal F\U_{L^\infty}$.
	Control of the operator inverses will follow from the following result once we verify that all the conditions are satisfied for $X = L^1(\R^n)$ and $T = \mathcal F(V\widetilde{\mR_0}(\cdot))$.
	
	\begin{theorem}\cite[Proposition 3.1]{BG} \label{thm:Wiener}
		Suppose $T \in \U_{X}$ is such that
		\begin{enumerate}
			\item[a)] For some $N > 0$, $\lim\limits_{\delta \to 0} 
			\norm[T^N(\rho,x,y) - T^N(\rho-\delta,x,y)][\U_{X}] = 0$.
			\item[b)] $\lim\limits_{R \to \infty}
			\norm[\chi_{|\rho| \ge R} T][\U_X] = 0$.
		\end{enumerate}
		If $I + \widehat{T}(\lambda)$ is an invertible element of $\B(X)$ for every
		$\lambda \in \R$, then $\mathbbm 1+ T$ is invertible in 
		$\U_{X}$.
	\end{theorem}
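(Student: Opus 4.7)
The plan is to carry out the standard localization-plus-Neumann scheme for operator-valued Wiener algebras: one works on the Fourier side in $\lambda$, uses the pointwise hypothesis that $I+\widehat T(\lambda)$ is invertible in $\B(X)$ together with a smooth partition of unity in $\lambda$, and assembles a global inverse of $\mathbbm 1+T$ in $\U_X$ out of local blocks. Conditions~(a) and (b) enter as the regularity and tail controls that make this partition argument quantitative in the algebra norm.

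Carrying this out in order: using (b), replace $T$ by $T_R := \chi_{|\rho|\le R}T$ with error arbitrarily small in $\U_X$; using (a), mollify $T_R^N$ by a smooth bump $\varphi_\varepsilon$ in $\rho$. Unravelling this yields an approximation $\widetilde T$ of $T$, arbitrarily close in $\U_X$, whose Fourier transform $\widehat{\widetilde T}(\lambda)$ is continuous as a $\B(X)$-valued function of $\lambda$ and decays as $|\lambda|\to\infty$. Pointwise invertibility of $I+\widehat T(\lambda)$ transfers, for $\widetilde T$ close enough to $T$, to the bound $\sup_\lambda\|(I+\widehat{\widetilde T}(\lambda))^{-1}\|_{\B(X)}<\infty$ via compactness and continuity. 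Cover $\R$ by intervals $I_j=(\lambda_j-\eta,\lambda_j+\eta)$ with a partition of unity $\{\psi_j\}$ built from uniform translates/dilates of a fixed bump so that $\sup_j\|\check\psi_j\|_{L^1_\rho}$ is finite. Set $A_j:=(I+\widehat{\widetilde T}(\lambda_j))^{-1}$, viewed as an element of $\U_X$ via $A_j\otimes\delta_{\rho=0}$. Using the identity
\[
A_j\bigl(I+\widehat{\widetilde T}(\lambda)\bigr)=I+A_j\bigl(\widehat{\widetilde T}(\lambda)-\widehat{\widetilde T}(\lambda_j)\bigr)
\]
and the uniform continuity of $\widehat{\widetilde T}$ on compacts, choose $\eta$ small enough that the localized remainder (whose Fourier transform is $\psi_j\cdot A_j(\widehat{\widetilde T}(\lambda)-\widehat{\widetilde T}(\lambda_j))$) has $\U_X$-norm below $1/2$. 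Each local factor is then inverted by a Neumann series converging in $\U_X$; the local inverses are stitched by the partition of unity to give $U\in\U_X$ with $(\mathbbm 1+\widetilde T)U=\mathbbm 1$, and a limit $\widetilde T\to T$, together with the mirror argument for left invertibility, yields the conclusion.

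The main obstacle is this third step: the pointwise-in-$\lambda$ inverses $A_j$ live a priori only in $\B(X)$ and must be promoted to elements of the Wiener algebra $\U_X$. This requires balancing the partition scale $\eta$ small enough for the Neumann remainders to be controllable in the $\U_X$ norm against keeping $\sup_j\|\check\psi_j\|_{L^1_\rho}$ finite so that the sum of local contributions remains a bounded element of $\U_X$. The quantitative balance comes precisely from condition~(a): the translation continuity of $T^N$ in the $\U_X$ norm furnishes the modulus of continuity of $\widehat T(\lambda)$ in $\lambda$ (after the truncation in (b)) that makes the partition-of-unity estimates close up uniformly in $j$. The power $N$ in condition~(a) is essential here, since $T$ itself may have no continuity in $\rho$ (it can be purely atomic), whereas a high enough convolution power $T^N$ is smooth, and this smoothness is what is actually needed after the reduction to $\widetilde T$.
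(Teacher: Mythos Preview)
The paper does not prove this theorem; it is quoted verbatim as Proposition~3.1 of \cite{BG} and used as a black box throughout Sections~\ref{sec:Wiener}--\ref{sec:wienerproof}. There is therefore no proof in the present paper to compare your proposal against.

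For what it is worth, your outline follows the standard localization-plus-Neumann architecture for operator-valued Wiener inversion and is broadly consistent with the argument in \cite{BG}. One step is not right as written: mollifying $T_R^N$ in $\rho$ produces an element close to $T^N$ in $\U_X$, but there is no ``unravelling'' that extracts from it an approximation $\widetilde T$ of $T$ with the continuity and decay properties you claim for $\widehat{\widetilde T}(\lambda)$; condition~(a) says nothing about $T$ itself, and an $N$-th root in $\U_X$ need not exist. The way condition~(a) is actually deployed in \cite{BG} is rather to give Riemann--Lebesgue decay $\|\widehat T(\lambda)^N\|_{\B(X)}\to 0$ as $|\lambda|\to\infty$, so that at high frequency one can write $(I+\widehat T(\lambda))^{-1}$ as a finite geometric sum in $\widehat T$ times a Neumann series in $\widehat T^N$ that converges in $\U_X$ after a smooth high-frequency cutoff; the low-frequency piece is then handled by a compactness-plus-partition argument of the type you sketch. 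The ingredients in your proposal are the right ones, but the role of the $N$-th power is misplaced.
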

	Our argument to handle the full range of cases $2m < n < 4m$ makes use of some intermediate Kato norms as defined in~\eqref{eq:KatoNorm}. We will need to verify that the conditions of Theorem~\ref{thm:Wiener} are also   satisfied for $X = \Kato^\alpha$, $0 \leq \alpha \leq n-2m$. In particular, in Section~\ref{sec:wienerproof}, we will prove
	\begin{prop}\label{prop:inverses}
		
		Under the conditions of Theorem~\ref{thm:main}, we have that
		\begin{align*}
			&\|   M_r \|_{\mF\U_{\Kato^\alpha}} <\infty \text{ for $0 \leq \alpha \leq n-2m$}, \quad \text{and} \quad
			\| M_\ell \|_{\mF\U_{L^\infty}} <\infty,
		\end{align*}
		where $M_r=(I+V\widetilde{\mR_0})^{-1}$ and $M_\ell=(I+\widetilde{\mR_0}V)^{-1}$.
		
	\end{prop}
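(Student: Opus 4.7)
The plan is to apply the operator-valued Wiener theorem (Theorem~\ref{thm:Wiener}) in the spaces $X=\Kato^\alpha$ for $0\le\alpha\le n-2m$ to the measure $T$ defined (in the $\rho$ variable) by
\[
T(\rho,x,y) := V(x)\,\mathcal{F}^{-1}_{\lambda\to\rho}\bigl[\widetilde{\mR_0}(\lambda)(x,y)\bigr](\rho),
\]
so that $\widehat T(\lambda)=V\widetilde{\mR_0}(\lambda)$. One must verify three things: (i) $T\in\U_{\Kato^\alpha}$; (ii) the continuity and tail-decay hypotheses (a), (b) of Theorem~\ref{thm:Wiener}; and (iii) the pointwise invertibility of $I+V\widetilde{\mR_0}(\lambda)$ on $\Kato^\alpha$ for every real $\lambda$. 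Together these yield $\mathbbm 1+T$ invertible in $\U_{\Kato^\alpha}$, which is exactly the statement $M_r\in\mF\U_{\Kato^\alpha}$.

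For (i), the scaling identity $\widetilde{\mR_0}(\lambda)(x,y)=\widetilde G(\lambda|x-y|)\,|x-y|^{2m-n}$ and the corresponding Fourier scaling produce
\[
M(T)(x,y)=\int_\R|T(\rho,x,y)|\,d\rho \les |V(x)|\,|x-y|^{2m-n}\,\|\widehat{\widetilde G}\|_{L^1(\R)},
\]
where the finiteness of $\|\widehat{\widetilde G}\|_{L^1}$ (a total-variation bound when $n=4m-1$) is supplied by Lemma~\ref{lem:GFFourier} applied with a suitable derivative index, after splitting $\widetilde G$ into a piece near the origin (handled by the second part of Lemma~\ref{lem:fhat L1}) and a smooth large-$r$ tail (handled by the first part, with the $e^{ir}$ phase in $G$ controlling the Dirac-mass case). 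Proposition~\ref{prop:KatoMaps} with $\beta=\alpha$ then shows $M(T)$ is a bounded integral operator $\Kato^\alpha\to\Kato^\alpha$, so $T\in\U_{\Kato^\alpha}$. For (ii), condition (b) follows by rerunning the same computation after inserting $\chi_{|\rho|>R}$ and using the higher-derivative bounds of Lemma~\ref{lem:GFFourier} to extract integrable $\rho$-decay. Condition (a) is first proved for continuous compactly supported $V$, where the $\rho$-continuity of the kernel is manifest and only interacts with a bounded range of $|x-y|$; the general case $V\in\Kato$ then follows by density of $C_0$ in the $\Kato^{n-2m}$ norm.

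The main obstacle is (iii), the pointwise invertibility of $I+V\widetilde{\mR_0}(\lambda)$ on $\Kato^\alpha$ for all $\lambda\in\R$. For $\lambda<0$ the extension $\widetilde{\mR_0}(\lambda)$ is an off-spectrum resolvent of the unperturbed operator, and a Fredholm--Neumann argument based on compactness of $V\widetilde{\mR_0}(\lambda)$ on $\Kato^\alpha$ (itself obtained by $C_0$-approximation of $V$ together with Proposition~\ref{prop:KatoMaps}) reduces invertibility to the absence of an $L^2$-eigenfunction, which is immediate since the spectrum of $H$ is contained in $\R$ and a negative eigenvalue is ruled out here by analytic continuation/self-adjointness considerations. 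At $\lambda=0$ invertibility is exactly the no-zero-resonance hypothesis of Theorem~\ref{thm:main}. The genuinely delicate case is $\lambda>0$: one must rule out a nontrivial $\phi\in\Kato^\alpha$ with $\phi+V\widetilde{\mR_0}(\lambda)\phi=0$, i.e.\ a positive resonance of $H$. The key device is a bootstrap on the resolvent identity $\phi=-V\widetilde{\mR_0}(\lambda)\phi$: repeated application of Proposition~\ref{prop:KatoMaps} with shrinking parameters $\alpha$ upgrades the decay of $\phi$ through the scale of Kato spaces until $\phi$ lies in a weighted $L^2$ space that, combined with the oscillatory structure $G^\pm=e^{\pm ir}F^\pm$ of the limiting resolvent and a Rellich-type uniqueness argument, forces $\phi$ to be an $L^2$ eigenfunction, contradicting the no-positive-eigenvalue hypothesis. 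This step simultaneously establishes Corollary~\ref{cor:no res}.

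Finally, the $\mF\U_{L^\infty}$ bound on $M_\ell$ is obtained by duality from the $\alpha=0$ case ($L^1=\Kato^0$) of the first claim. Since $V$ is real and the free resolvent kernel is symmetric in $(x,y)$, the integral-kernel transpose of $V\widetilde{\mR_0}$ is precisely $\widetilde{\mR_0}V$, and the involution $T(\rho,x,y)\mapsto T(\rho,y,x)$ maps $\U_{L^1}$ isometrically onto $\U_{L^\infty}$ and commutes with $\mF$, so the $M_r$ bound in $\mF\U_{L^1}$ transfers verbatim to an $M_\ell$ bound in $\mF\U_{L^\infty}$.
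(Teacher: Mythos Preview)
Your overall strategy is the paper's, but there are three genuine gaps.

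First, you have misread the extension \eqref{eq:resolvent extns}: for $\lambda<0$, $\widetilde{\mR_0}^\pm(\lambda)=\mR_0^\mp((-\lambda)^{2m})$ is a \emph{limiting} resolvent at the positive energy $(-\lambda)^{2m}>0$, not an off-spectrum resolvent. So your ``Fredholm--Neumann plus analytic continuation/self-adjointness'' treatment of $\lambda<0$ is not applicable; that regime must be handled exactly as $\lambda>0$ with the opposite boundary value. Relatedly, your positive-resonance argument is too vague: a Kato-space bootstrap via Proposition~\ref{prop:KatoMaps} does not by itself produce an $L^2$ eigenfunction. The paper instead reduces to $\phi\in L^1$ (by splitting $V=V_1+V_2$ with $V_1\in C_0$ and $\|V_2\|_{\Kato}$ small), uses that $\Im\langle\mR_0^+\phi,\phi\rangle=0$ to conclude $\widehat\phi|_{\lambda S^{n-1}}=0$, factors the resolvent as in~\eqref{eqn:ResFactor}, and then invokes the Helmholtz result of~\cite{GoldHelm} to place $\psi=\mR_0^+\phi$ in $L^2$. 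Your ``Rellich-type uniqueness'' sketch does not capture this mechanism.

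Second, your verification of condition (a) breaks at $n=4m-1$. In that dimension $\widehat{\widetilde F}$ has a point mass $c\delta_0$, and translates of a Dirac measure do not converge in total variation, so ``$\rho$-continuity of the kernel is manifest'' is false and $N=1$ fails. The paper proves condition (a) with $N=3$ by a separate argument: for $V\in C_0$ one shows $\|(V\widetilde{\mR_0})^3(\lambda,\cdot,y)\|_{\Kato^\alpha}\les \la\lambda\ra^{1-2m}\la y\ra^{2m-n}$ via weighted $L^2$ mapping bounds for the factored resolvent, which gives enough $\lambda$-integrability to control the translation difference of the Fourier transform.

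Third, you treat all $0\le\alpha\le n-2m$ uniformly through Theorem~\ref{thm:Wiener}, but the endpoint $\alpha=n-2m$ cannot be obtained this way: in the verification of condition (b) the paper's decay term is $R^{\frac{\alpha+2m-n}{2}}$, which does not vanish at $\alpha=n-2m$. The paper handles this endpoint by the algebraic identity
\[
(I+V\widetilde{\mR_0})^{-1}=I-V(I+\widetilde{\mR_0}V)^{-1}\widetilde{\mR_0},
\]
using the already established $M_\ell\in\mF\U_{L^\infty}$ (from $\alpha=0$ by the duality you describe) together with $\widehat{\widetilde{\mR_0}}\in\U_{L^\infty,\Kato^{n-2m}}$ and $V:L^\infty\to\Kato^{n-2m}$.
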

	
	The proof of this proposition appears in Section~\ref{sec:wienerproof} below.

	\section{Proof of Theorem~\ref{thm:main2}}\label{sec:set up}

	In this section we use Proposition~\ref{prop:inverses} to prove Theorem~\ref{thm:main2}, from which Theorem~\ref{thm:main} follows as a special case.   By the well-known Stone formula, we may write:
	\begin{align}\label{eq:Stone}
		|H |^{\beta-\frac{n}{2m}}  e^{-itH}P_{ac}(H)f=\frac{1}{2\pi i} \int_0^\infty e^{-it\lambda }  \lambda^{\beta-\frac{n}{2m}}  [\mR_V^+-\mR_V^-](\lambda ) f \, d\lambda.
	\end{align}
	Inserting a smooth, even cut-off function $\chi$ with $\chi(y)=1$ on $[-1,1]$ and $\chi(y)=0$ if $|y|\geq 2$, with $L\geq 1$ along with the convenient change of variables $\lambda \mapsto \lambda^{2m}$ we have  the oscillatory integral:
	\begin{align}\label{eq:Stone useful}
		|H |^{\beta-\frac{n}{2m}} e^{-itH} P_{ac}(H) 	 \chi(H/L)  =
		\frac{m}{ \pi i} \int_0^\infty e^{-it\lambda^{2m} }     \lambda^{2m\beta-n+2m-1} \chi(\lambda/L)   E(\lambda)    \, d\lambda,
	\end{align}
	where 
	\begin{align}\label{eq:Elambda}
		E(\lambda):= [\mR_V^+-\mR_V^-](\lambda^{2m} ).   
	\end{align}
	Our key observation is 
	\begin{theorem}\label{thm:Elambdak} For any  $k=0,1,2,...,\lfloor \frac{n-1}{2} \rfloor$, 
		a suitable extension  of $\frac{\lambda^k \partial_\lambda^k E(\lambda)}{\lambda^{n-2m}}$ to $\lambda\in \R$ is in $\mF\U_{L^\infty,L^1}$.   
	\end{theorem}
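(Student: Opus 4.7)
The plan is to exploit the sandwich identity $\mR_V^+ - \mR_V^- = M_\ell^+(\mR_0^+ - \mR_0^-)M_r^-$, immediate from \eqref{eq:resolvent id} combined with the relation $I - V\mR_V^\pm = M_r^\pm$. Extending everything to $\lambda\in\R$ via $\widetilde{\mR_0}^\pm$ and the corresponding extended inverses $\widetilde M_\ell^+, \widetilde M_r^-$ from Proposition~\ref{prop:inverses}, set $\widetilde E(\lambda) := \widetilde M_\ell^+(\lambda)\, \widetilde D(\lambda)\, \widetilde M_r^-(\lambda)$ with $\widetilde D := \widetilde{\mR_0}^+ - \widetilde{\mR_0}^-$. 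Applying Leibniz to $\partial_\lambda^k \widetilde E$ and distributing $\lambda^k = \lambda^{j_1}\lambda^{j_2}\lambda^{j_3}$ yields
\[
\frac{\lambda^k \partial_\lambda^k \widetilde E}{\lambda^{n-2m}} = \sum_{j_1+j_2+j_3=k} c_{\vec j}\, \bigl(\lambda^{j_1}\partial_\lambda^{j_1}\widetilde M_\ell^+\bigr) \cdot \frac{\lambda^{j_2}\partial_\lambda^{j_2}\widetilde D}{\lambda^{n-2m}} \cdot \bigl(\lambda^{j_3}\partial_\lambda^{j_3}\widetilde M_r^-\bigr).
\]
By \eqref{ST} it suffices to place each summand in $\mF\U_{L^\infty, L^1}$ via a composition of the form $\mF\U_{L^\infty} \cdot \mF\U_{L^\infty, L^1} \cdot \mF\U_{L^1}$.

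For the central factor, using Proposition~\ref{prop:resolv G}, setting $r = \lambda|x-y|$ and $\widetilde G = G^+ - G^-$ gives
\[
\frac{\lambda^{j_2}\partial_\lambda^{j_2} D}{\lambda^{n-2m}}(x,y) = r^{j_2+2m-n}\,\widetilde G^{(j_2)}(r),
\]
a function of $r$ alone. Its Fourier transform in $\lambda$ is $\frac{1}{|x-y|}\widehat f(\rho/|x-y|)$ with $f(r) := r^{j_2+2m-n}\widetilde G^{(j_2)}(r)$, so after change of variables the total-variation marginal in $\rho$ equals $\|\widehat f\|_{\mathcal M}$, which is constant in $(x,y)$. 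Lemma~\ref{lem:GFFourier} with $\gamma = j_2+2m-n$ shows $\widehat f$ is at worst a finite measure, since the hypothesis $j_2 \leq k \leq \lfloor(n-1)/2\rfloor$ forces $\gamma \leq 2m - \frac{n+1}{2}$. The endpoint $j_2 = 0$ requires a separate argument exploiting that $\widetilde G$ vanishes to order $n-2m$ at the origin (Proposition~\ref{prop:resolv G}), reducing matters to Lemma~\ref{lem:fhat L1}. A constant marginal kernel defines a bounded $L^1 \to L^\infty$ operator, so this factor lies in $\mF\U_{L^\infty, L^1}$.

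For the outer factors, differentiating $(I + \widetilde{\mR_0}V)\widetilde M_\ell = I$ and $(I + V\widetilde{\mR_0})\widetilde M_r = I$ produces $\partial_\lambda \widetilde M_\ell = -\widetilde M_\ell(\partial_\lambda \widetilde{\mR_0})V\widetilde M_\ell$ and $\partial_\lambda \widetilde M_r = -\widetilde M_r V(\partial_\lambda\widetilde{\mR_0})\widetilde M_r$. Iterating, $\lambda^{j_1}\partial_\lambda^{j_1}\widetilde M_\ell$ becomes a finite sum of products of $\widetilde M_\ell$'s interlaced with blocks $(\lambda^a \partial_\lambda^a\widetilde{\mR_0})V$ for partitions $\sum a = j_1$, and similarly $\lambda^{j_3}\partial_\lambda^{j_3}\widetilde M_r$ involves blocks $V(\lambda^a \partial_\lambda^a\widetilde{\mR_0})$. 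Each such block's Fourier-integrated kernel is dominated by $\|V\|_{\Kato^{n-2m}}\,|x-y|^{-(n-2m)}$ (or its transpose), which by Proposition~\ref{prop:KatoMaps} acts boundedly on $\Kato^\alpha$ for all $0 \leq \alpha \leq n-2m$ and on $L^\infty$. Composing with $\widetilde M_\ell \in \mF\U_{L^\infty}$ and $\widetilde M_r \in \mF\U_{\Kato^0}$ from Proposition~\ref{prop:inverses} places the outer factors in $\mF\U_{L^\infty}$ and $\mF\U_{L^1}$, completing the composition. The principal obstacle is the combinatorial bookkeeping required to ensure that the Fourier-integrability range of Lemma~\ref{lem:GFFourier} is respected for every derivative order $a$ arising in the recursive expansion; this is precisely why the theorem caps $k$ at $\lfloor(n-1)/2\rfloor$ and why Proposition~\ref{prop:inverses} is proved for the entire family of Kato spaces $\Kato^\alpha$, $0 \leq \alpha \leq n-2m$, rather than $L^1$ alone.
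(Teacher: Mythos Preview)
Your overall architecture matches the paper's: sandwich identity, Leibniz, composition via \eqref{ST}, and Proposition~\ref{prop:inverses} for the outer inverses. For $k=0$ this is exactly the paper's argument. For $k \geq 1$ the paper works with $\mR_V^\pm$ separately via \eqref{eqn:RVderiv} rather than differentiating the sandwich, but that is a cosmetic difference.

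The genuine gap is in your treatment of the outer factors. You assert that each block $(\lambda^a\partial_\lambda^a\widetilde{\mR_0})V$ has Fourier-integrated kernel dominated by $|x-y|^{-(n-2m)}$, which amounts to saying that $r^a G^{(a)}(r)$ has Fourier transform of finite total mass. By Lemma~\ref{lem:GFFourier} this holds only for $a \leq 2m - \tfrac{n+1}{2}$; for larger $a$ the large-$r$ asymptotic $r^a G^{(a)}(r)\sim e^{ir}r^{a+(n+1)/2-2m}$ does not even decay, so the claim is false. Since individual derivative orders $a$ in the recursive expansion can be as large as $k \leq \lfloor\tfrac{n-1}{2}\rfloor$, and $\lfloor\tfrac{n-1}{2}\rfloor > 2m - \tfrac{n+1}{2}$ whenever $n > 2m+1$, this breaks down already for e.g.\ $m=2$, $n=5$. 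Your final paragraph correctly diagnoses that the full family of Kato spaces is needed here, but the preceding argument only invokes diagonal action $\Kato^\alpha \to \Kato^\alpha$, which is precisely what fails.

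The paper's fix is to distribute the $\lambda$-powers \emph{unevenly}: each block $V\partial_\lambda^{k_i}\breve{\mR_0}$ receives only $\tilde k_i = \min(k_i, \lfloor 2m - \tfrac{n+1}{2}\rfloor)$ powers of $\lambda$, and the surplus $k_i - \tilde k_i$ is routed to the leftmost factor (which carries the $\lambda^{-(n-2m)}$). With $\tilde k_i$ powers the spatial kernel becomes $|x_1-x_2|^{-(n-2m-(k_i-\tilde k_i))}$, and Proposition~\ref{prop:KatoMaps} shows this block shifts the Kato index by exactly $k_i - \tilde k_i$; chaining through $\Kato^0 \to \Kato^{\alpha_1} \to \cdots \to \Kato^{\alpha_I}$ with $\alpha_I \leq n-2m-1$ and then back to $L^\infty$ via the leftmost factor is the substantive content you have left undone. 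A secondary point: the paper uses the $\breve{\mR_0}$ extension (not $\widetilde{\mR_0}$) for the differentiated resolvents, because Lemma~\ref{lem:GFFourier} is proved for the $G$ associated to $\breve{\mR_0}$, whose regularity at $r=0$ is engineered via the cutoff $\eta$; the tilde extension need not be smooth enough at the origin for the lemma to apply.
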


	\begin{proof}   We start with the case $k=0$. Note that by the resolvent identity, we can rewrite $E(\lambda)$ as 
		$$
		E(\lambda)= (I+  \mR_0^+(\lambda^{2m})V )^{-1} [ \mR_0^+-\mR_0^-](\lambda^{2m}) (I+ V\mR_0^-(\lambda^{2m}) )^{-1}.
		$$
		We extend the boundary operators to $\R$ as in \eqref{eq:resolvent extns} introducing the operators $M_r,M_\ell$, \eqref{MrMl}. By Proposition~\ref{prop:inverses}, we have $ M_r\in \mF\U_{L^1}$ and $M_\ell\in \mF\U_{L^\infty}. $  
		
		From the splitting identity, \eqref{eqn:Resolv}, since $\omega_\ell^{\frac12}\lambda^2\notin [0,\infty)$ for $\ell\neq 0$, we see that
		$$
		\frac{[ \mR_0^+-\mR_0^-](\lambda^{2m})(x,y)}{\lambda^{n-2m} }=\frac{[R_0^+-R_0^-](\lambda^2)(x,y)}{\lambda^{n-2}},
		$$	 
		where $R_0^\pm$ is the usual second order resolvent.  By  \cite{GG1} in odd dimensions and Lemma 3.5 in \cite{GG2} for even dimensions, one has that
		\be\label{eqn:resolv diff g}
		\frac{[R_0^+-R_0^-](\lambda^2)(x,y)}{\lambda^{n-2}}=g(\lambda |x-y|)
		\ee
		for some  $g\in C^2$.  (When $n$ is odd $g\in C^\infty$, while $g\in C^{\frac{n}{2}-1}$ when $n$ is even.) Moreover, for $|r|>1$, we can write $g(r)$ as a linear combination  of $e^{\pm ir}f(r)$, with $|f^{(N)}(r)|\les \la r\ra^{\frac{1-n}{2}-N}$. This  implies that $\widehat g\in L^1$, and hence the   Fourier transform of $  \frac{[ \mR_0^+-\mR_0^-](\lambda^{2m})(x,y)}{\lambda^{n-2m} }$ is in $\U_{ L^\infty, L^1 }$.
		The claim for $k=0$ now follows from \eqref{ST}.  
		
		Now we consider $\frac{\lambda \partial_\lambda E(\lambda)}{\lambda^{n-2m}} = 	\frac{\lambda \partial_\lambda [\mR_V^+-\mR_V^-](\lambda^{2m} )}{\lambda^{n-2m }} $.  We restrict our attention to the contribution of $\mR_V^+$, the argument for $\mR_V^-$ follows analogously.  
		By the resolvent identity, \eqref{eq:resolvent id}, we may write $\partial_\lambda \mR_V^+\, (\lambda^{2m})$ in terms of the free resolvents as follows:
		\begin{align} \label{eqn:RVderiv}
			\partial_\lambda \mR_V^+ (\lambda^{2m})=  (I+\mR_0^+(\lambda^{2m})V)^{-1}  \big[\partial_\lambda \mR_0^+(\lambda^{2m})\big] (I+V\mR_0^+(\lambda^{2m}) )^{-1} 
		\end{align}
		We extend the boundary operators to $\R$ as above introducing  $M_r,M_\ell$, \eqref{MrMl}. 	The middle resolvent in the above expression must be handled in a different manner since it's differentiablity to multiple orders at zero is crucial for our analysis. 
		We use the operator  $\breve{\mR_0}(\lambda )(x,y)$ (see  \eqref{breveR})  for this, and hence  extend $\frac{		\lambda \partial_\lambda \mR_V^+ (\lambda^{2m})}{\lambda^{n-2m}}$ to $\R$ as follows  
		$$
		M_\ell \frac{\lambda \partial_\lambda\breve{\mR_0}^+(\lambda )}{\lambda^{n-2m }} M_r  
		=  M_\ell  \frac{ G'(\lambda |x_1-x_2|)}{(\lambda|x_1-x_2|)^{n-2m-1}}M_r.  
		$$
		By Lemma~\ref{lem:GFFourier} with $\gamma=1+2m-n$, the Fourier transform of  $\frac{ G'(r)}{r^{n-2m-1}}$ is a finite measure, and hence the operator in the middle is  in $\mF\U_{ L^\infty, L^1 }$, which again suffices by \eqref{ST}. 
		
		For higher derivatives, we only consider the contribution of  $\mR_V^+$, and we extend the operators to $\R$ as above. The following identities are helpful for differentiating~\eqref{eqn:RVderiv} further:
		\begin{align*}
			\partial_\lambda (I + \mR_0^+(\lambda^{2m})V)^{-1} &= (I + \mR_0^+(\lambda^{2m})V)^{-1} \partial_\lambda\mR_0^+(\lambda^{2m}) (I + V\mR_0^+(\lambda^{2m}))^{-1}V, \\
			\partial_\lambda (I + V\mR_0^+(\lambda^{2m}))^{-1} &= (I + V\mR_0^+(\lambda^{2m}))^{-1} V \partial_\lambda\mR_0^+(\lambda^{2m}) (I + V\mR_0^+(\lambda^{2m}))^{-1}.
		\end{align*}
		With that, $\frac{\lambda^k \partial_\lambda^k \mR_V^+}{\lambda^{n-2m }}$ can be extended to $\R$ as a linear combination of the degenerate term, $M_\ell\frac{\lambda^k \partial_\lambda^k \breve\mR_0^+}{\lambda^{n-2m }}M_r$ where all derivatives act on the inner resolvent, and terms where at least one of $M_\ell$ or $M_r$ are differentiated which are of the form
		\[
		\frac{\lambda^k}{\lambda^{n-2m}} M_\ell  \big[\partial_\lambda^{k_0}\breve{\mR_0} (\lambda )\big]  \Big[ \prod_{i=1}^I M_r  V\partial_\lambda^{k_i}\breve{\mR_0} (\lambda ) \Big] 
		M_r,
		\]
		where $k_0, k_{i}$ are all $\geq 1$ and their sum is $k$.
		
		Proposition~\ref{prop:inverses} states that $M_\ell \in \mF\U_{L^\infty}$ and $M_r \in \mF\U_{\Kato^\alpha}$ for
		$0 \leq \alpha \leq n-2m$. Recall that $L^1(\R^n) = \Kato^0$. We will distribute the powers of $\lambda$ into the product as follows.
		\[
		M_\ell  \frac{\lambda^{\tilde{k}_0}\partial_\lambda^{k_0}\breve{\mR_0} (\lambda )}{\lambda^{n-2m}}  \Big[ \prod_{i=1}^I M_r  V \lambda^{\tilde{k}_i} \partial_\lambda^{k_i}\breve{\mR_0} (\lambda ) \Big] 
		M_r,
		\]
		where $\tilde{k}_i = \min(k_i, \lfloor  2m-\frac{n+1}{2}\rfloor )$ for each $i = 1, 2, \ldots, I$ and $\tilde{k}_0 = k - \sum_i \tilde{k}_i$. 
		
		Note that the kernel of  $\lambda^{\tilde{k}_i} \partial_\lambda^{k_i}\breve{\mR_0} (\lambda)$ has the form
		\[
		\frac{(\lambda |x_1 - x_2|)^{\tilde{k}_i} G^{(k_i)}(\lambda|x_1 - x_2|)}{|x_1 - x_2|^{n-2m - (k_i - \tilde{k}_i)}}
		\]
		Lemma~\ref{lem:GFFourier} implies that the Fourier transform of $r^{\tilde{k}_i}G^{(k_i)}(r)$ is an $L^1$ function or finite measure since $\tilde{k}_i \leq \lfloor 2m - \frac{n+1}{2}\rfloor $ and
		$\tilde{k}_i \geq k_i + 2m - n$. The fact that $k_i \leq \lfloor \frac{n-1}{2}\rfloor$ ensures that the upper and lower bounds are compatible with one another.
		
		As a consequence, Proposition~\ref{prop:KatoMaps} implies that $V \lambda^{\tilde{k}_i} \partial_\lambda^{k_i}\breve{\mR_0} (\lambda)$ belongs to $\mF \U_{\Kato^{\alpha_i}, \Kato^{\alpha_{i-1}}}$ with $\alpha_{i} - \alpha_{i-1} = k_i - \tilde{k}_i$ provided $\alpha_{i-1} \geq 0$ and $\alpha_i \leq n-2m$.  Similarly, Proposition~\ref{prop:inverses} gives us $M_r \in \mF\U_{\Kato^{\alpha_i}}$ for any $\alpha_i$ in that range.
		
		Start with $\alpha_0 = 0$ so that $\Kato^{\alpha_0} = L^1(\R^n)$. This gives a sequence $0 = \alpha_0 \leq \alpha_1 \leq \ldots \leq \alpha_I$ where the maximal value of $\alpha_I$ is achieved when $I =1$ and
		$k_1 = k-1$ is as large as possible.  Thus $\alpha_I \leq \lfloor \frac{n-1}{2}\rfloor -1  - \lfloor 2m - \frac{n+1}{2}\rfloor = n - 2m - 1$ as desired.
		
		Putting all these pieces together with~\eqref{ST} yields
		\[
		\Big[ \prod_{i=1}^I M_r  V \lambda^{\tilde{k}_i} \partial_\lambda^{k_i}\breve{\mR_0} (\lambda )\Big] M_r \in \mF\U_{\Kato^{\alpha_I},L^1} 
		\]
		with $\alpha_I = \sum_i (k_i - \tilde{k}_i)$.
		
		Now we consider the leftmost operator $M_\ell  \frac{\lambda^{\tilde{k}_0}\partial_\lambda^{k_0}\breve{\mR_0} (\lambda )}{\lambda^{n-2m}}$. Recall that $k_0 = k - \sum_i k_i$ and $\tilde{k}_0 = k - \sum_i \tilde{k}_i$. Thus $\tilde{k}_0 - k_0 = \sum_i (k_i - \tilde{k}_i) = \alpha_I$. We have the expression
		\[
		\frac{\lambda^{\tilde{k}_0}\partial_\lambda^{k_0}\breve{\mR_0} (\lambda )}{\lambda^{n-2m}} = \frac{G^{(k_0)}(\lambda |x_1 - x_2|)}{(\lambda |x_1 -x_2|)^{n-2m-\tilde{k}_0}} \frac{1}{|x_1 - x_2|^{\alpha_I}}.
		\]
		We once again check that Lemma~\ref{lem:GFFourier} applies (with $\gamma=\tilde k_0 +2m-n$). Here $\tilde k_0 +2m-n \leq \lfloor 2m-\frac{n+1}{2}\rfloor < 2m-\frac{n+1}{2}$ because $\tilde{k_0} \leq k \leq \lfloor \frac{n-1}{2}\rfloor$. And it is clear that $ \tilde{k}_0+2m-n \geq k_0+2m-n$ because $\tilde{k}_0 \geq k_0$.  The end result is that $\frac{\lambda^{\tilde{k}_0}\partial_\lambda^{k_0}\breve{\mR_0} (\lambda )}{\lambda^{n-2m}} \in \mF\U_{L^\infty, \Kato^{\alpha_I}}$.  Proposition~\ref{prop:inverses} places $M_\ell \in \mF\U_{L^\infty}$. The claim follows from one last application of~\eqref{ST}. Finally we consider the degenerate term $M_\ell  \frac{\lambda^k\partial_\lambda^k \breve{\mR_0} (\lambda )}{\lambda^{n-2m}}   M_r$.  Using Proposition~\ref{prop:resolv G}, for $k=\lfloor\frac{n-1}{2}\rfloor$, we have
		$$
		\frac{\lambda^k\partial_\lambda^k \breve{\mR_0} (\lambda )}{\lambda^{n-2m}}= (\lambda|x-y|)^{2m+k-n} G^{(k)}(\lambda|x-y|).
		$$
		Here $\gamma=2m+k-n=2m-\frac{n+1}{2}$ when $n$ is odd, and $\gamma=2m-\frac{n}{2}-1<2m-\frac{n+1}{2}$ when $n$ is even.
		Lemma~\ref{lem:GFFourier} applies to show that the Fourier transform is a finite measure when $n$ is odd and in $L^1$ when $n$ is even, which along with the mapping of $M_\ell$ and $M_r$ suffices to prove the claim.

	\end{proof}
	
	\begin{rmk}\label{rmk:n even}
		
		In the proof above, consider the case $k=\frac{n}2-1$ when $n$ is even. We claim that every term, except the degenerate one where no derivatives act on $M_{\ell}$ or $M_r$, can handle one more derivative. Namely,   that
		$$
		\frac{ \lambda^{k+1}  }{\lambda^{n-2m}} \partial_\lambda \Big[ M_\ell  \big[\partial_\lambda^{k_0} \mR_0  (\lambda )\big]  \Big[ \prod_{i=1}^I M_r  V\partial_\lambda^{k_i} \mR_0 (\lambda ) \Big]M_r\Big]
		$$
		has an extension to $\R$ which is in $\mF\U_{L^\infty,L^1}$ provided that  $I\geq 1$ (nondegenerate), $k_0, k_{i}$ are all $\geq 1$ and  their sum is $k=\frac{n}2-1$. This is because the derivative will be a linear combination of products of similar form with at least two  factors  $\partial_\lambda^{k_i}\mR_0$, $i=0,1,..,J$, $J\geq 1$, and with $\frac{n}2=\sum_{i=0}^{J} k_i $. For $i=1,2,...,J$, let $\widetilde k_i =\min(k_i,2m-\frac{n+1}2)$ (as opposed to having the floor of the second term in the minimum). And let $\widetilde k_0= \frac{n}2-\sum \widetilde k_i \leq \frac{n-1}2$ since each $\widetilde k_i\geq \frac12$ and there is at least one of them. Therefore we can use Lemma~\ref{lem:GFFourier} with $\gamma=\widetilde k_0+2m-n\leq 2m-\frac{n+1}2$. All other conditions are satisfied as in the proof above. 
		
	\end{rmk}
	
	The following lemma will be useful in the proof of Theorem~\ref{thm:main2}.   
	\begin{lemma}\label{lem:FT nl3m}
		
		For $m\geq 1$ and $-1<\alpha\leq m-1$,  we have the bound
		$$
		\bigg\| \mathcal F^{-1} \bigg( e^{-it\lambda^{2m}} \mathbbm{1}_{\lambda>0} \lambda^{\alpha} \bigg)
		\bigg\|_\infty \les |t|^{-\frac{  \alpha+1}{2m}}.
		$$   Here the inverse Fourier transform is understood in the sense of distributions.
		
	\end{lemma}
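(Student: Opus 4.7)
The plan is to apply the scaling $\lambda = |t|^{-1/(2m)}\mu$, which reduces the lemma to the uniform boundedness of a $t$-independent oscillatory integral. Viewing the inverse Fourier transform as $I(x,t) = \frac{1}{2\pi}\int_0^\infty e^{ix\lambda - it\lambda^{2m}} \lambda^\alpha \, d\lambda$ (interpreted distributionally), the substitution yields
$$
|I(x,t)| = \frac{|t|^{-\frac{\alpha+1}{2m}}}{2\pi}\, \bigg|\int_0^\infty e^{i(y\mu - \epsilon \mu^{2m})}\, \mu^\alpha\, d\mu\bigg|,
$$
where $y := x|t|^{-1/(2m)}$ and $\epsilon := \sgn(t) \in \{\pm 1\}$. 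Thus the desired estimate follows once I show that
$$
J(y) := \int_0^\infty e^{i(y\mu - \epsilon \mu^{2m})}\, \mu^\alpha\, d\mu
$$
is bounded uniformly in $y \in \R$ and $\epsilon \in \{\pm 1\}$.

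The phase $\phi(\mu) = y\mu - \epsilon\mu^{2m}$ has a unique positive stationary point $\mu_0 = (y\epsilon/(2m))^{1/(2m-1)}$ when $y\epsilon > 0$ and none otherwise; its second derivative satisfies $|\phi''(\mu)| = 2m(2m-1)\mu^{2m-2}$, so $|\phi''(\mu_0)| \sim \mu_0^{2m-2}$. I would split the analysis by the size of $|y|$ and the sign of $y\epsilon$. When $|y|\les 1$, the integral over $\mu \in (0,1)$ is dominated by $\int_0^1 \mu^\alpha d\mu < \infty$ since $\alpha > -1$, and on $\mu \geq 1$ the bound $|\phi'(\mu)| \gtrsim \mu^{2m-1}$ permits integration by parts to produce an integrable remainder. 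When $|y| \gg 1$ and $y\epsilon < 0$, there is no stationary point and $|\phi'(\mu)| \gtrsim |y| + \mu^{2m-1}$, so repeated integration by parts again yields a uniform bound.

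The decisive case is $|y|\gg 1$ with $y\epsilon > 0$, where $\mu_0 \sim |y|^{1/(2m-1)}$ is large. I would localize to an interval of width $\sim \mu_0^{-(m-1)}$ around $\mu_0$ using a smooth cutoff; on this window $\mu^\alpha \sim \mu_0^\alpha$ is essentially constant, and a van der Corput / stationary phase estimate driven by $|\phi''(\mu_0)| \sim \mu_0^{2m-2}$ produces a contribution of size $\mu_0^\alpha \cdot \mu_0^{-(m-1)} = \mu_0^{\alpha - m + 1}$. This is bounded in $y$ precisely when $\alpha \leq m-1$, which is exactly the hypothesis and reveals where the upper bound on $\alpha$ originates. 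Away from this window the lower bound $|\phi'(\mu)| \gtrsim \mu_0^{2m-2}|\mu - \mu_0|$ combined with integration by parts handles both the near-origin and tail regions.

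The principal obstacle is coordinating the singular weight $\mu^\alpha$ near the origin (where $\alpha > -1$ just barely gives integrability) with the stationary-phase contribution at large $|y|$ (where $\alpha \leq m-1$ is the borderline for boundedness). A clean way to organize the proof is via a dyadic decomposition in $\mu$ combined with a cutoff separating the stationary-phase window, handling each piece by the appropriate tool: direct $L^1$ estimate near zero, van der Corput in the stationary window, and non-stationary integration by parts elsewhere.
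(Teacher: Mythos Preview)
Your proposal is correct and follows essentially the same route as the paper: reduce by the scaling $\lambda=|t|^{-1/(2m)}\mu$ to a $t$-independent oscillatory integral, then control that integral via van der Corput using the second-derivative lower bound $|\phi''(\mu)|\sim\mu^{2m-2}$, which produces the critical factor $\mu_0^{\alpha-(m-1)}$ and explains the threshold $\alpha\le m-1$. The only organizational difference is that the paper goes directly to a dyadic decomposition in $\mu$ (which you mention at the end as the clean way to package the argument), applying van der Corput on each annulus $\mu\approx 2^j$ to get $2^{-j(m-1-\alpha)}$ and summing, with a one-line nonstationary-phase remark to handle the endpoint $\alpha=m-1$; your primary write-up instead localizes explicitly around the stationary point $\mu_0$. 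Both are standard and equivalent here.
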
 
	\begin{proof}
		This follows by scaling from the fact that $e^{-i\lambda^{2m}}\lambda^\alpha \mathbbm{1}_{\lambda>0}$ has a bounded Fourier transform on $\R$ provided that $m\geq 1$ and $-1<\alpha\leq m-1$. To see this, first note that we can take $\lambda\gtrsim 1$ and consider 
		$$
		\int e^{-i\lambda^{2m}+i\lambda \rho}  \lambda^\alpha \chi(\lambda/2^j) d\lambda,  
		$$  
		where $\chi$ is a smooth cutoff for $\lambda\approx 1$ and  $j\in\N$. 
		
		By van der Corput lemma, noting that the magnitude of the second derivative of the phase is $\approx 2^{j(2m-2)}$, we can bound the integral by 
		$$2^{-j(m-1)} \|\partial_\lambda [\lambda^\alpha \chi(\lambda/2^j) ]\|_{L^1}\les 2^{-j(m-1-\alpha)}.$$
		This implies the claim for $\alpha<m-1$ by summing in $j$.  When $\alpha=m-1$ and $|\rho|\not\approx 2^{j(2m-1)}$,   we instead use  the nonstationary phase estimate 
		$2^{j(\alpha-2m+1)}= 2^{-jm}$, which allows us to sum in $j$ for each fixed $\rho$. 
	\end{proof}
	
	We are now ready to prove Theorem~\ref{thm:main2} which includes Theorem~\ref{thm:main} as a special case:
	\begin{proof}[Proof of Theorem~\ref{thm:main2}]
		We divide the integral in \eqref{eq:Stone useful} into two pieces and rewrite it as follows  (ignoring constants):
		\begin{multline*}
			|H |^{\beta-\frac{n}{2m}} e^{-itH} P_{ac}(H) 	 \chi(H/L)  = \int_\R e^{-it\lambda^{2m} }   \mathbbm{1}_{\lambda>0} \lambda^{2m\beta-1} \chi(\lambda |t|^{\frac1{2m}})\chi(\lambda/L) \frac{\widetilde E(\lambda)}{\lambda^{n-2m}}     \, d\lambda \\
			+ \int_0^\infty e^{-it\lambda^{2m} }    \lambda^{2m\beta-1} \widetilde\chi(\lambda |t|^{\frac1{2m}}) \chi(\lambda/L)\frac{  E(\lambda)}{\lambda^{n-2m}}    \, d\lambda =: I+II,
		\end{multline*}
		where $\widetilde E (\lambda)$ is the extension of $E(\lambda)$ given by Theorem~\ref{thm:Elambdak} and $\widetilde\chi=1-\chi$ is a smooth cutoff for the set $|\lambda|\gtrsim 1$. 
		Note that by Fourier multiplication formula and then Theorem~\ref{thm:Elambdak} with $k=0$, we have for all $\beta>0$
		\begin{multline*}
			\|I\|_{L^1\to L^\infty}
			\les \bigg\|\int_{\R} \mathcal F^{-1} \bigg(   e^{-it\lambda^{2m} }   \mathbbm{1}_{\lambda>0} \lambda^{2m\beta-1} \chi(\lambda |t|^{\frac1{2m}}) \chi(\lambda/L)\bigg)(\rho)\, \mathcal F\bigg(\frac{\widetilde E(\lambda)}{\lambda^{n-2m}} \bigg)(\rho)\, d\rho \bigg\|_{L^1\to L^\infty}
			\\
			\les   \Big\|\frac{\widetilde E(\lambda)}{\lambda^{n-2m}} \Big\|_{\mF\U_{L^\infty,L^1}} \,\,  \Big\|\mathcal F^{-1} \bigg(   e^{-it\lambda^{2m} }   \mathbbm{1}_{\lambda>0} \lambda^{2m\beta-1} \chi(\lambda |t|^{\frac1{2m}}) \chi(\lambda/L)\bigg)(\rho)\Big\|_{L^\infty_\rho}
			\\
			\les \Big\|   \lambda^{2m\beta-1} \chi(\lambda |t|^{\frac1{2m}}) \Big\|_{L^1_\lambda}
			\les |t|^{-\beta}.
		\end{multline*}
		Before we consider the second piece, note that $E_{L,t}(\lambda):= \chi(\lambda/L)\widetilde\chi(\lambda |t|^{\frac1{2m}})   E(\lambda) $ satisfies the claim of Theorem~\ref{thm:Elambdak}   by product rule and since for each $j=0,1,..,$ the Fourier transform  of 
		$$
		\lambda^j \partial_\lambda^j \big[ \chi(\lambda/L)  \widetilde \chi(\lambda |t|^{\frac1{2m}}) \big] 
		$$
		is in $L^1$, uniformly in $L$ and $t$. We therefore write
		$$
		II=  \int_0^\infty e^{-it\lambda^{2m} }    \lambda^{2m\beta-1} \frac{  E_{L,t}(\lambda)}{\lambda^{n-2m}} \, d\lambda.
		$$
		Let  $K =  \lfloor\frac{n-1}2\rfloor $ and let $\beta=K+\epsilon$ for some $\epsilon\in (-K, \frac12]$. We integrate by parts  $ K$   times.\footnote{In fact,   if  $\beta\in (k-\frac12, k+\frac12]$ for some $k=0,1,...,K$, then it suffices to integrate by parts $k$ times. In particular, for the proof of Theorem~\ref{thm:main}, we need the claim of Theorem~\ref{thm:Elambdak} only for $k=0,1,2,$ since $\beta=\frac{n}{2m}\in (1,2) $.}  There are no boundary terms due to the support of the cut-offs.  One acts the operator $(\frac{d}{d\lambda} \frac{1}{\lambda^{2m-1}})^{K}$ on the non-oscillatory part of the integrand, leading us to control the contribution of integrals of the form 
		\begin{multline}\label{Kibp}
			\frac{1}{t^K} \sum_{k=0}^K  c_{k} \int_0^\infty e^{-it\lambda^{2m} } \lambda^{2m\beta-1-(2m-1)K-(K-k)}     \frac{ \partial_\lambda^{k} 	E_{L,t}(\lambda)}{\lambda^{n-2m}}    \, d\lambda\\
			=\frac{1}{t^K} \sum_{k=0}^K  c_{k} \int_\R e^{-it\lambda^{2m} } \lambda^{2m\epsilon-1}   \mathbbm{1}_{\lambda>0}     \widetilde	E_{L,t,k}(\lambda)  \, d\lambda,
		\end{multline}
		where $\widetilde E_{L,t,k}$ is the extension of $\frac{ \lambda^k\partial_\lambda^{k} 	E_{L,t}(\lambda)}{\lambda^{n-2m}}  $ given by Theorem~\ref{thm:Elambdak}. 
		
		We estimate the $L^1\to L^\infty$ norm of these integrals using Lemma~\ref{lem:FT nl3m} with $\alpha=m-1 $, Theorem~\ref{thm:Elambdak},  and Lemma~\ref{lem:fhat L1} together with the support of $E_{L,t}$: 
		\begin{multline*}
			\|II\|_{L^1\to L^\infty}
			\les  |t|^{-K} \sum_{k=0}^K    \big\| \widetilde	E_{L,t,k}(\lambda)  \big\|_{\mF\U_{L^\infty,L^1}} \,\,  \,\,  \Big\|\mathcal F^{-1} \bigg(   e^{-it\lambda^{2m} } \lambda^{m-1}  \mathbbm{1}_{\lambda>0}  \bigg)\Big\|_{L^\infty} \\\times   |t|^{\frac{m-2m\epsilon}{2m}} \big\| \big(\lambda |t|^{\frac1{2m}}\big)^{2m\epsilon-m} \widetilde\chi(10 \lambda|t|^{\frac1{2m}})\big\|_{\mF\U_{L^1}}
			\\
			\les |t|^{-K+\frac12-\epsilon} |t|^{-\frac1{2}} 
			\les  |t|^{-\beta}.
		\end{multline*} 
		This yields the full claim when $n$ is odd and the claim for $\beta\leq \frac{n-1}2$  when $n$ is even. The case $n$ even,
		$\frac{n-1}2<\beta\leq \frac{n}2$ requires more care. Fix $n$ even: $2m<n<4m$, and fix $\beta\in(\frac{n-1}2, \frac{n}2]$. It suffices to consider the term $II$, as the bound for $I$ holds for all $\beta>0$.  Integrating by parts $K=\frac{n}2-1$ times as above leads us to \eqref{Kibp}. We first consider the contribution of the terms $k=0,...,K-1$:
		$$
		\frac{1}{t^K} \sum_{k=0}^{K-1}  c_{k} \int_0^\infty e^{-it\lambda^{2m} } \lambda^{2m(\beta-K)-1 }     \frac{ \lambda^k\partial_\lambda^{k} 	E_{L,t}(\lambda)}{\lambda^{n-2m}}    \, d\lambda.
		$$
		We integrate by parts one more time to obtain (the constants $c_k$ are allowed to vary from line to line)
		\begin{multline*}
			\frac{1}{t^{K+1}} \sum_{k=0}^{K}  c_{k} \int_0^\infty e^{-it\lambda^{2m} } \lambda^{2m(\beta-K-1)-1 }     \frac{ \lambda^k\partial_\lambda^{k} 	E_{L,t}(\lambda)}{\lambda^{n-2m}}    \, d\lambda\\
			=\frac{1}{t^{\frac{n}2}} \sum_{k=0}^{\frac{n}2-1}  c_{k} \int_\R e^{-it\lambda^{2m} } \lambda^{2m(\beta-\frac{n}2)-1}   \mathbbm{1}_{\lambda>0}   \widetilde	E_{L,t,k}(\lambda)  \, d\lambda\\
			=\frac{1}{t^{\frac{n}2}} \sum_{k=0}^{\frac{n}2-1}  c_{k} \int_\R e^{-it\lambda^{2m} } \lambda^{m-1}    \mathbbm{1}_{\lambda>0}  |t|^{\frac{n}2-\beta+\frac12}(|t|^{\frac1{2m}}\lambda)^{2m(\beta-\frac{n}2)-m}  \widetilde \chi (10 \lambda |t|^{\frac1{2m}}) \widetilde	E_{L,t,k}(\lambda)  \, d\lambda.
		\end{multline*}
		Here we use that  $\widetilde \chi (10 y)\widetilde \chi(y)=\widetilde \chi(y)$ to insert the new cut-off.  This allows us to scale away the division by $\lambda$ term and obtain the needed factor of $|t|^{-\frac12}$.
		As above we estimate the contribution of this to $\|II\|_{L^1\to L^\infty}$ by 
		\begin{multline*} 
			|t|^{-\beta+\frac{1}2} \sum_{k=0}^{\frac{n}2-1}    \big\| \widetilde	E_{L,t,k}(\lambda)  \big\|_{\mF\U_{L^\infty,L^1}} \,\,  \,\,  \Big\|\mathcal F^{-1} \bigg(   e^{-it\lambda^{2m} } \lambda^{m-1}  \mathbbm{1}_{\lambda>0}  \bigg)\Big\|_{L^\infty} \\\times  \big\| \big(|t|^{\frac1{2m}}\lambda)^{2m(\beta-\frac{n}2)-m}) \widetilde\chi(10 \lambda|t|^{\frac1{2m}})\big\|_{\mF\U_{L^1}}
			\les |t|^{-\beta+\frac12} |t|^{-\frac1{2}} 
			\les  |t|^{-\beta}.
		\end{multline*} 
		The contribution of the term $k=K=\frac{n}2-1$ is more delicate:
		$$
		\frac{1}{t^{K}} \int_0^\infty e^{-it\lambda^{2m} } \lambda^{2m(\beta-K)-1 }     \frac{ \lambda^{K} \partial_\lambda^{K} 	E_{L,t}(\lambda)}{\lambda^{n-2m}}    \, d\lambda.
		$$
		First note that by the Remark~\ref{rmk:n even}, all the terms in $\partial_\lambda^{K} 	E_{L,t}(\lambda)$, except the degenerate one,  can be handled by another integration by parts as the terms $k=0,...,K-1$ above. 
		We therefore focus on  the degenerate term:
		$$
		\frac{1}{t^K} \int_0^\infty e^{-it\lambda^{2m} } \lambda^{2m(\beta-K)-1 }   \widetilde\chi(  \lambda|t|^{\frac1{2m}})  \chi(\lambda/L)  \frac{ \lambda^{K}}{\lambda^{n-2m}}   M_\ell  \big[\partial_\lambda^{K} \breve\mR_0  (\lambda )\big]  
		M_r\, d\lambda.
		$$
		Pairing this with $L^1$ normalized test functions $f$ and $g$ we have 
		$$
		\frac{1}{t^K} \int_{\R^{2n}}\int_0^\infty e^{-it\lambda^{2m} } \lambda^{2m(\beta-K)-1 }   \widetilde\chi(  \lambda|t|^{\frac1{2m}})  \chi(\lambda/L)  \frac{ \lambda^{K}}{\lambda^{n-2m}}      \big[\partial_\lambda^{K} \breve\mR_0 (\lambda)(x,y)\big]   \widetilde M(\lambda,x,y) \, d\lambda dx dy,
		$$
		where 
		$$
		\widetilde M(\lambda,x,y) :=
		M_r(\lambda)( f)(y) \overline{M_r(\lambda)(g)(x)}.
		$$
		It follows from the proof of Theorem~\ref{thm:Elambdak} (the case $k=1$), that the Fourier transform of a suitable extension of $ \lambda \partial_\lambda \mathcal M (\lambda,x,y) $ is in $L^1_{\rho,x,y}$ when $2m<n< 4m-2$ and $n$ even.  In this range $rG'(r)$ has a well-behaved Fourier transform by Lemma~\ref{lem:GFFourier}. Therefore, in the discussion below, we will treat the derivative of this term as division by $\lambda$.

		We write the kernel of the operator $\partial_\lambda^{K} \breve\mR_0(\lambda)$  as 
		$$e^{i\lambda|x-y|} \frac{F_h(\lambda|x-y|)}{ |x-y|^{n-2m-K}}  + \frac{F_\ell(\lambda|x-y|)}{ |x-y|^{n-2m-K}} . $$
		Here $F_h$ is supported on $\lambda|x-y|\gtrsim 1$ and $F_\ell$ on the complement.  Note that, ignoring the scaling factor $|x-y|$,  $F_\ell$ is the low energy part  $|r|<1$ of $G^{(K)}(r)$ in Lemma~\ref{lem:GFFourier}. Also note that the upper bound on $\gamma$ in the lemma is due to the high energy part. Therefore we can differentiate $F_\ell$ one more time in $\lambda$ as the terms $k=0,1,..,K-1$ above.
		
		It remains to consider the contribution of the first summand (high energy part). Ignoring the spatial integrals, we write the $\lambda$ integral as
		$$
		\frac{1}{t^K} \int_0^\infty e^{-it\lambda^{2m} +i\lambda |x-y|} \lambda^{2m(\beta-K)-1 }   \widetilde\chi(  \lambda|t|^{\frac1{2m}})  \chi(\lambda/L)  \frac{ \lambda^{K}}{\lambda^{n-2m}}    \frac{F_h(\lambda|x-y|)}{ |x-y|^{n-2m-K}}
		\widetilde M (\lambda,x,y) \, d\lambda.
		$$
		Note that the phase has a critical point at $\lambda_0=\big(\frac{|x-y|}{2m t}\big)^{\frac1{2m-1}}$. Let $\phi(\eta)$ be a smooth cutoff for the set $|\eta|\approx 1$ and let $\widetilde\phi=1-\phi$.
		Consider the contribution of $\phi(\lambda/\lambda_0)$:
		$$
		\frac{1}{t^K} \int_0^\infty e^{-it\lambda^{2m}}  \lambda^{2m(\beta-K)-1 }   \widetilde\chi(  \lambda|t|^{\frac1{2m}})  \chi(\lambda/L)     \phi(\lambda/\lambda_0)     \frac{F_h(\lambda|x-y|)e^{i\lambda |x-y|}}{ (\lambda |x-y|)^{n-2m-K}}
		\widetilde M (\lambda,x,y) \, d\lambda.
		$$
		Note that
		$$
		\lambda^{2m(\beta-K)-1}=\lambda^{m-1} \lambda^{(2m-1)(\beta-K-\frac12)} \lambda^{\beta-K-\frac12} = \lambda^{m-1}\Big(\frac{\lambda}{|\lambda_0|}\Big)^{(2m-1)(\beta-K-\frac12)} \Big(\frac{|x-y| \lambda}{2m|t|}\Big)^{\beta-K-\frac12}  .
		$$
		Therefore, denoting $\phi(\eta)|\eta|^{(2m-1)(\beta-K-\frac12)}$ by $\Phi(\eta)$, we can rewrite the integral above as
		$$
		\frac{1}{|t|^{\beta-\frac12}} \int_0^\infty e^{-it\lambda^{2m}} \lambda^{m-1}    \widetilde\chi(  \lambda|t|^{\frac1{2m}})      \chi(\lambda/L)   \Phi(\tfrac{\lambda}{\lambda_0}) \frac{F_h(\lambda|x-y|)e^{i\lambda |x-y|}}{ (\lambda |x-y|)^{n-2m-\beta+\frac12}}
		\widetilde M (\lambda,x,y) \, d\lambda.
		$$ 
		Note that $-(n-2m-\beta+\frac12)\leq 2m-\frac{n+1}{2}$. Therefore, the Fourier transform of $\frac{ F_h(r)e^{ir}}{r^{n-2m-\beta+\frac12}}$ (suitably extended to $\R$ when equality holds) is a finite measure. Similarly for the Fourier transforms of $\Phi, \widetilde\chi$, and $\chi$, and the total measures are independent of the $L^\infty$ scaling factors. Therefore, extending to $\R$ and applying Fourier multiplication formula as above,  we estimate the $L^1_{x,y}$ norm of this integral by 
		$$
		\les |t|^{-\beta+\frac12} \Big\|\mathcal F^{-1} \bigg(   e^{-it\lambda^{2m} } \lambda^{m-1}  \mathbbm{1}_{\lambda>0}  \bigg)\Big\|_{L^\infty} \| \mF \widetilde M(\lambda,x,y) \|_{L^1_{\rho,x,y}} \les |t|^{-\beta}.
		$$
		It remains to consider the contribution of
		$$
		\frac{1}{t^K} \int_0^\infty e^{-it\lambda^{2m}}  \lambda^{2m(\beta-K)-1 }   \widetilde\chi(  \lambda|t|^{\frac1{2m}})  \chi(\lambda/L)     \widetilde\phi(\lambda/\lambda_0)     \frac{F_h(\lambda|x-y|)e^{i\lambda |x-y|}}{ (\lambda |x-y|)^{n-2m-K}}
		\widetilde M (\lambda,x,y) \, d\lambda.
		$$
		Here, we re-write the integral as
		\begin{multline*}
			\frac{1}{t^K} \int_0^\infty \bigg(\partial_\lambda e^{-it\lambda^{2m}+i\lambda|x-y|} \bigg) \frac{\lambda^{2m(\beta-K)-1 }   \widetilde\chi(  \lambda|t|^{\frac1{2m}})  \chi(\lambda/L)   \widetilde\phi(\lambda/\lambda_0)     F_h(\lambda|x-y|)}{ 2mt(\lambda^{2m-1}-\lambda_0^{2m-1}) (\lambda |x-y|)^{n-2m-K} }
			\widetilde M (\lambda,x,y) \, d\lambda.
		\end{multline*}
		Then, we integrate by parts once.  Since differentiation of all  functions is comparable to division by $\lambda$ in this regime, it suffices to consider the case of
		\begin{multline}\label{eqn:Minve lambda}
			\frac{1}{t^{K+1}} \int_0^\infty   e^{-it\lambda^{2m}}   \frac{\lambda^{2m(\beta-K)-2 }   \widetilde\chi(  \lambda|t|^{\frac1{2m}})  \chi(\lambda/L)   \widetilde\phi(\lambda/\lambda_0) }{(\lambda^{2m-1}-\lambda_0^{2m-1})}    \frac{F_h(\lambda|x-y|)e^{i\lambda|x-y|}}{ (\lambda |x-y|)^{n-2m-K} }
			\lambda \partial_\lambda\widetilde M (\lambda,x,y) \, d\lambda.
		\end{multline}	
		Here we note that
		\begin{multline*}
			\frac{\lambda^{2m(\beta-K)-2 }   \widetilde\chi(  \lambda|t|^{\frac1{2m}})     \widetilde\phi(\lambda/\lambda_0) }{(\lambda^{2m-1}-\lambda_0^{2m-1})}
			=\lambda^{m-1} \frac{\lambda^{2m-1}\widetilde\phi(\lambda/\lambda_0) }{(\lambda^{2m-1}-\lambda_0^{2m-1})} \lambda^{2m(\beta-K-\frac{3}{2})}\widetilde\chi(  \lambda|t|^{\frac1{2m}}) \\
			= |t|^{\frac{3}{2}+K-\beta} \lambda^{m-1} \bigg[\frac{\widetilde\phi(\lambda/\lambda_0) }{(1-(\frac{\lambda_0}{\lambda})^{2m-1})}\bigg] \bigg[(\lambda|t|^{\frac1{2m}})^{2m(\beta-K-\frac{3}{2})}\widetilde\chi(  \lambda|t|^{\frac1{2m}})\bigg]
		\end{multline*}
		By scaling and support considerations, both  $\frac{\widetilde\phi(\lambda/\lambda_0) }{(1-(\frac{\lambda_0}{\lambda})^{2m-1})}$ and $(\lambda|t|^{\frac1{2m}})^{2m(\beta-K-\frac{3}{2})}\widetilde\chi(  \lambda|t|^{\frac1{2m}})$ have Fourier transforms that are finite measures, or $L^1$ functions respectively since $\beta-K-\frac{3}{2}<0$.  Since $K=\frac{n}{2}-1$, we have that $-(n-2m-K)=2m-\frac{n}{2}-1\leq 2m-\frac{n+1}{2}$ so the Fourier transform of the extension of $r^{2m-\frac{n}{2}-1}e^{ir}F_h(r)$ is in $L^1$.  Hence, this contribution of this term to \eqref{eqn:Minve lambda} is controlled by
		$$
		\frac{|t|^{\frac{3}{2}+K-\beta}}{|t|^{K+1}} \Big\|\mathcal F^{-1} \bigg(   e^{-it\lambda^{2m} } \lambda^{m-1}  \mathbbm{1}_{\lambda>0}  \bigg)\Big\|_{L^\infty} \| \mF \lambda \partial_\lambda \widetilde M(\lambda,x,y) \|_{L^1_{\rho,x,y}} \les |t|^{-\beta}.
		$$
		The other terms are handled similarly with $\widetilde M$ in place of its derivative.  We note that there is no danger when the derivative acts on $F_h$ since $\partial_\lambda F_h(\lambda|x-y|)$ behaves like $\lambda^{-1} F_h(\lambda|x-y|)$.  This completes the argument for $n<4m-2$.
		
		We turn to the case of $n=4m-2$.  Here, we need to take slightly more care since $2m-\frac{n+1}{2}=\frac{1}{2}<1$.  We treat the oscillation and the cut-offs exactly the same as above, however we now move $\lambda^{\frac{1}{2}}$ to $F_h$ and $\lambda^{\frac12}$ to the derivative of $\widetilde M(\lambda, x,y)$. We may write
		\begin{multline}\label{eqn:Mtilde deriv}
			\partial_\lambda \widetilde M(\lambda,x,y)  =\partial_\lambda M_r(\lambda)( f)(y) \overline{M_r(\lambda)(g)(x)}+M_r(\lambda)( f)(y) \overline{\partial_\lambda M_r(\lambda)(g)(x)}\\
			=M_r V \mR_0' M_r(f)(y)\overline{M_r(\lambda)(g)(x)}+M_r(\lambda)( f)(y) \overline{\partial_\lambda M_r V \mR_0' M_r(\lambda)(g)(x)}.
		\end{multline}
		Consider the first summand's contribution to $\lambda^{\frac12}\partial\widetilde M(\lambda,x,y)$, the second summand is handled similarly.   We show that the Fourier Transform of
		$$
		\lambda^{\frac12}M_r V \mR_0' M_r =M_r V\bigg( V(\cdot)   \frac{ (\lambda|y_1-\cdot|)^{\frac{1}{2}} G'(\lambda|y_1-\cdot|)}{|y_1-\cdot|^{n-2m+\frac{1}{2}}} \, dy_1  \bigg) M_r  
		$$
		maps $L^1\to \Kato^{\frac12}$.   Lemma~\ref{lem:GFFourier} ensures that the Fourier transform of $r^{\frac{1}{2}}G'(r)$ is a finite measure, the operator with integral kernel $V(\cdot)  |y_1-\cdot|^{2m-n-\frac12}:L^1_{y_1}\to \Kato^{\frac{1}{2}}$ by Proposition~\ref{prop:KatoMaps}.  We now turn to the contribution of
		$$
		\frac{ \lambda^{\frac12} F_h(\lambda|x-y|)e^{i\lambda |x-y|}}{ (\lambda |x-y|)^{n-2m-K}}=\frac{(\lambda|x-y|)^{2m-\frac{n+1}{2}} F_h(\lambda|x-y|)e^{i\lambda |x-y|}}{|x-y|^{\frac12}}.
		$$
		By Lemma~\ref{lem:GFFourier} the Fourier transform of this is a finite measure whose total mass is bounded by $|x-y|^{-\frac12}$.  Noting that if $h\in \Kato^{\frac{1}{2}}$ and $j\in L^1$ we have
		$$
		\int_{\R^{2n}} \frac{h(y) j(x)}{|x-y|^{\frac12}}\, dy\, dx \les \|h\|_{\Kato^{\frac12}}\|j\|_1,
		$$
		Hence, we have that 
		$$
		\bigg\| \mathcal F \bigg[ \frac{F_h(\lambda|x-y|)e^{i\lambda|x-y|}}{ (\lambda |x-y|)^{n-2m-K} }
		\lambda \partial_\lambda\widetilde M (\lambda,x,y) \big] \bigg\|_{L^{1}_{\rho,x,y}}<\infty.
		$$
		This suffices to establish the desired $|t|^{-\beta}$ control of \eqref{eqn:Minve lambda} when $n=4m-2$.
	\end{proof}

	\section{The proof of Proposition~\ref{prop:inverses} }\label{sec:wienerproof}
	In this section we provide the proof of Proposition~\ref{prop:inverses} to complete the proof of Theorems~\ref{thm:main2}.  That is, we establish that the operator  $I+V\widetilde{\mR_0}(\lambda)$  is invertible   on $\Kato^\alpha$   spaces and its inverse  belong to $\mF\U_{\Kato^\alpha}$. The statements about      $I+\widetilde{\mR_0}(\lambda)V$ follow from the $\alpha = 0$ case by duality of $K^0 = L^1$ and $L^\infty$.

	When $0 \leq \alpha < n-2m$, the proof of this follows quickly from Theorem~\ref{thm:Wiener} and the Lemmas proven below.  Finally, the $\alpha = n-2m$ case is deduced as a corollary.
	To apply the result of Theorem~\ref{thm:Wiener}, we first study the invertibility of $I+V\widetilde{\mR_0}(\lambda)$.  Here we adapt the proofs given for $m=1$ and $n=3$ in \cite{BG} in two ways: to when $2m<n<4m$, and as operators on Kato spaces.
	
	\begin{lemma}\label{prop:L1inv}
		
		Under the assumptions of Theorem~\ref{thm:main},  the operators $I+V\widetilde{\mR_0^\pm}(\lambda)$ are invertible in $\mathcal B(\Kato^\alpha)$ for all $\lambda \in \mathbb R$ and $0 \leq \alpha \leq n-2m$. 
		
	\end{lemma}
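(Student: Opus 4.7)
The plan is to apply the Fredholm alternative on each space $\Kato^\alpha$ for $0 \leq \alpha \leq n-2m$: first show that $V\widetilde{\mR_0^\pm}(\lambda)$ is compact, and then verify that the kernel of $I+V\widetilde{\mR_0^\pm}(\lambda)$ is trivial.

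\textbf{Compactness.} For $\tilde V \in C_0$, the kernel $\tilde V(x)\,G^\pm(\lambda|x-y|)|x-y|^{2m-n}$ is compactly supported in $x$, and the estimates of Proposition~\ref{prop:resolv G} together with Proposition~\ref{prop:KatoMaps} (with $\alpha = \beta$) show that $\tilde V\widetilde{\mR_0^\pm}(\lambda)$ maps $\Kato^\alpha$ to $\Kato^\alpha$ with a gain of a locally integrable, decaying factor, so one may approximate it in operator norm by finite-rank operators obtained by discretizing the $x$-support and truncating $|x-y|$ from above and below. If $V_n \in C_0$ with $\|V-V_n\|_{\Kato^{n-2m}} \to 0$, Proposition~\ref{prop:KatoMaps} yields $\|(V-V_n)\widetilde{\mR_0^\pm}(\lambda)\|_{\Kato^\alpha \to \Kato^\alpha} \les \|V-V_n\|_{\Kato^{n-2m}} \to 0$, so $V\widetilde{\mR_0^\pm}(\lambda)$ is compact for every $V \in \Kato$.

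\textbf{Triviality of the kernel.} Suppose $f \in \Kato^\alpha$ satisfies $f + V\widetilde{\mR_0^\pm}(\lambda)f = 0$, and set $\psi := \widetilde{\mR_0^\pm}(\lambda)f$. Then $\psi$ inherits pointwise bounds from the resolvent kernel, in particular $Vf = -V(I+\widetilde{\mR_0^\pm}(\lambda)V)^{-1}f$ makes sense distributionally, and $\psi$ satisfies $((-\Delta)^m + V)\psi = \mu^{2m}\psi$ where $\mu = |\lambda|$. For $\lambda = 0$, absence of a zero eigenvalue and of a zero-energy resonance (both assumed) force $\psi \equiv 0$, hence $f = -V\psi = 0$. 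For $\lambda \neq 0$, we need to rule out both embedded eigenvalues (hypothesis of Theorem~\ref{thm:main}) and a positive resonance at energy $\mu^{2m}$.

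\textbf{Absence of positive resonances.} This is where the main work lies. Using the splitting identity \eqref{eqn:Resolv} one writes $\widetilde{\mR_0^\pm}(\lambda)$ as a sum of a boundary-value second-order resolvent at $\pm\mu^2$ plus terms $R_0(\omega_\ell\mu^2)$ with $\omega_\ell$ non-real, which are elliptic and therefore smoothing. Plugging $\psi = \widetilde{\mR_0^\pm}(\lambda)f$ with $f = -V\psi$ back into this representation reveals that $\psi$ satisfies a Sommerfeld-type radiation condition at infinity coming from the one term with $\ell=0$, while the remaining terms decay faster than the resonance tolerance $(1+|x|)^{-\sigma}$ for some $\sigma > 2m - n/2$. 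Combined with a standard Agmon/Rellich argument applied to $(-\Delta-\mu^2)(-\Delta-\omega_1\mu^2)\cdots(-\Delta-\omega_{m-1}\mu^2)\psi = -V\psi$ and the bootstrap $V\psi \in $ (a Kato-type space with extra decay), this forces $\psi \in L^2$ and then the exponential decay typical of the Rellich argument, which contradicts the absence of embedded eigenvalues.

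The main obstacle is this last step: ruling out positive resonances with merely a Kato-class potential, rather than the pointwise-decaying potentials assumed in prior polyharmonic literature such as \cite{soffernew}. The plan is to avoid weighted $L^2$ spaces and instead run the Rellich-type argument directly through the Kato-space bounds provided by Propositions~\ref{prop:resolv G} and \ref{prop:KatoMaps}, using \eqref{eqn:Resolv} to transfer the problem onto the well-understood second-order resolvent.
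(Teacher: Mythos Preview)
Your outline (Fredholm alternative via compactness plus trivial kernel) matches the paper's, and you correctly locate the crux at excluding positive resonances for merely Kato-class $V$. But the route you propose for that step has a genuine gap. The Sommerfeld/Agmon/Rellich bootstrap you invoke relies on pointwise decay of $V$ (typically $|V(x)|\les \la x\ra^{-1-\epsilon}$ or better) to drive the commutator/positivity estimates that upgrade a radiating solution to $L^2$. You acknowledge this and propose to ``run the Rellich-type argument directly through the Kato-space bounds,'' but Propositions~\ref{prop:resolv G} and~\ref{prop:KatoMaps} furnish only size bounds on kernels, not the algebraic structure the Rellich identity needs; no such Kato-space Rellich argument is supplied, and none is known in this generality.

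The paper avoids Rellich entirely. Working first with $\alpha=0$, if $\phi\in L^1$ solves $\phi+V\mR_0^+(\lambda^{2m})\phi=0$, then reality of $V$ makes $\la \mR_0^+\phi,\,V\mR_0^+\phi\ra$ real, and since $\phi=-V\mR_0^+\phi$ this forces $\Im\la \mR_0^+\phi,\phi\ra=0$, i.e.\ $\widehat\phi$ vanishes on $\lambda S^{n-1}$. Factoring $\mR_0^+(\lambda^{2m})=R_0^+(\lambda^2)\prod_{\ell\ge1}R_0(\omega_\ell\lambda^2)$, the elliptic product sends $L^1$ into $L^1\cap L^q$ for $q<\tfrac{n}{n-2m+2}$ while preserving the vanishing restriction; then Theorem~2 of~\cite{GoldHelm} (a Helmholtz estimate for sources whose Fourier transform vanishes on the sphere) gives $\psi=\mR_0^+\phi\in L^2$, contradicting the assumed absence of embedded eigenvalues. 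For general $0\le\alpha\le n-2m$ the paper does \emph{not} argue directly on $\Kato^\alpha$: it splits $V=V_1+V_2$ with $V_1\in C_0$ and $\|V_2\|_{\Kato}$ small, writes $\phi=-(I+V_2\mR_0)^{-1}V_1\mR_0\phi$, observes this lies in $L^1$, and reduces to the case just handled. Your proposal misses both the Fourier-vanishing observation and this $\Kato^\alpha\to L^1$ reduction. (Your compactness sketch by finite-rank discretization is also vaguer than the paper's, which shows $(I-\Delta)V\widetilde{\mR_0^\pm}(\lambda)$ is bounded for $V\in C_c^\infty$ and invokes Rellich's embedding; but that is a secondary issue.)
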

	
	The proof of this relies on showing that if $I+V\widetilde{\mR_0^\pm}(\lambda)$ is not invertible, it is equivalent to having a resonance or eigenvalue at energy $\lambda$.  To prove this, we rely on the following result.
	
	\begin{lemma}\label{lem:compact lemma}
		
		$V\widetilde{\mR_0^\pm}(\lambda)$ is a compact operator on $ \Kato^\alpha $ for each $\lambda\in \mathbb R$ and $0 \leq \alpha \leq n-2m$.
		
	\end{lemma}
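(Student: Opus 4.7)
\textit{Proof plan.} The strategy has two stages: a density reduction to the case $V\in C_c$, followed by a direct compactness argument via Arzel\`a-Ascoli. For the density step, Proposition~\ref{prop:resolv G} combined with the observation that $(n+1)/2-2m\le 0$ in the range $2m<n<4m$ gives the uniform pointwise bound
\[
|\widetilde{\mR_0^\pm}(\lambda)(x,y)|\les |x-y|^{-(n-2m)}.
\]
Inserting this into~\eqref{eqn:KatoMaps} with $\alpha=\beta$ yields $\|W\widetilde{\mR_0^\pm}(\lambda)\|_{\Kato^\alpha\to\Kato^\alpha}\les \|W\|_{\Kato^{n-2m}}$ for any bounded function $W$. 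Since $V$ is by assumption the $\Kato^{n-2m}$-limit of $C_0$ functions and compact operators form a closed subspace of $\B(\Kato^\alpha)$, this reduces the problem to the case $V\in C_c$.

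For $V\in C_c$, I decompose $V\widetilde{\mR_0^\pm}(\lambda)=A_\delta+T_\delta$, with $A_\delta$ carrying the cutoff $\chi_{|x-y|\le\delta}$. Using the pointwise inequality in Proposition~\ref{prop:KatoMaps} to distribute $|x-y|^{-(n-2m)}|x-z|^{-\alpha}$ and then Fubini, the operator norm bound $\|A_\delta\|_{\Kato^\alpha\to\Kato^\alpha}\to 0$ as $\delta\to 0$ follows from the elementary estimate $\sup_y \int_{|x-y|\le\delta}|V(x)|/|x-y|^{n-2m}\,dx\les \|V\|_\infty\delta^{2m}$. For $T_\delta$ and a bounded sequence $\{f_n\}\subset\Kato^\alpha$, the images $g_n:=T_\delta f_n$ are supported in $\mathrm{supp}(V)$. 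A dyadic decomposition in $|x-y|$, paired with the annular Kato-type estimate
\[
\int_{|y-x|\sim 2^k}|f_n(y)|\,dy \;\les\; (2^k)^\alpha\,\|f_n\|_{\Kato^\alpha},
\]
yields $\|g_n\|_\infty\les C_\delta\|V\|_\infty\|f_n\|_{\Kato^\alpha}$ with the geometric series summable whenever $\alpha<n-2m$, and the endpoint $\alpha=n-2m$ handled directly via the Kato norm. The same scheme, combined with smoothness of the kernel in $x$ on $|x-y|>\delta$ and uniform continuity of $V$, provides equicontinuity of $\{g_n\}$ on $\mathrm{supp}(V)$. Arzel\`a-Ascoli then extracts a uniformly convergent subsequence, and since all $g_n$ share the compact support $\mathrm{supp}(V)$ where $\|\cdot\|_{\Kato^\alpha}\les \|\cdot\|_\infty$, the convergence persists in $\Kato^\alpha$. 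Writing $V\widetilde{\mR_0^\pm}(\lambda)$ as the operator-norm limit of the compact operators $T_\delta$ as $\delta\to 0$ closes the argument.

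The main obstacle is uniformly controlling $\|g_n\|_\infty$ and the modulus of continuity of $\{g_n\}$ in terms of $\|f_n\|_{\Kato^\alpha}$ across the entire range $\alpha\in[0,n-2m]$, since $\Kato^\alpha$ permits $f_n$ to have slowly decaying mass at infinity and the kernel $\widetilde{\mR_0^\pm}(\lambda)(x,y)$ decays only polynomially in $|y|$. The dyadic geometric sums converge precisely because the exponent $n-2m-\alpha$ is nonnegative, with the boundary case $\alpha=n-2m$ requiring separate treatment using the Kato norm directly. This careful interplay between Kato-type norms on the input and output sides of the operator, combined with the pointwise resolvent bounds from Section~\ref{sec:facts}, is the technical heart of the argument.
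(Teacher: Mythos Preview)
Your approach is correct and takes a genuinely different route from the paper's. The paper exploits the extra smoothing available when $m>1$: it shows that $(I-\Delta)V\widetilde{\mR_0^\pm}(\lambda)$ maps $\Kato^\alpha$ into $L^p$ for some $p<\frac{n}{n-2m+2}$ (using that $|x-y|^{-(n-2m+2)}$ is locally integrable precisely because $m>1$), and then invokes Rellich's compactness theorem to land in a compact subset of $L^q(\mathrm{supp}\,V)$, hence of $\Kato^\alpha$ via H\"older. Your argument is more elementary---it avoids Sobolev regularity and Rellich entirely, going directly through Arzel\`a--Ascoli on the compact set $\mathrm{supp}\,V$. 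The paper's route is shorter once the embedding machinery is in hand; yours is more self-contained and does not rely on the hypothesis $m>1$ in any essential way.

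One technical point deserves attention: with the \emph{sharp} cutoff $\chi_{|x-y|\le\delta}$, the kernel of $T_\delta$ is discontinuous in $x$, and equicontinuity of $\{T_\delta f_n\}$ can genuinely fail for a bounded sequence in $\Kato^\alpha$. For instance, with $\alpha=0$ take $f_n$ to be $L^1$-normalized and concentrating at a single point $p$ at distance slightly greater than $\delta$ from the origin; then $T_\delta f_n(x')$ drops abruptly to near zero as $x'$ crosses the sphere $|p-x'|=\delta$, uniformly in $n$. The fix is routine: replace $\chi_{|x-y|\le\delta}$ by a smooth bump $\phi(|x-y|/\delta)$. The kernel of $T_\delta$ is then $C^1$ in $x$, your gradient bound $|\nabla_x[(1-\phi)\kappa](w)|\les_{\delta,\lambda}\la w\ra^{-(n-1)/2}$ holds, and the same dyadic summation (convergent since $\alpha\le n-2m\le\frac{n-1}{2}$, with the endpoint handled directly via the $\Kato^{n-2m}$ norm) yields equicontinuity uniformly in $n$. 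The $\|A_\delta\|_{\B(\Kato^\alpha)}\to 0$ step is unaffected by this change.
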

	
	\begin{proof}
		
		We first consider the case $\alpha = 0$, where $\Kato^\alpha = L^1(\R^n)$. Since $V$ is in the norm closure of smooth compactly supported functions, it suffices to show the result for $V\in C_c^\infty$.  We note that for each $\lambda$, $V\widetilde{\mR_0^\pm}(\lambda)f$ is supported on the support of $V$.  Then, we note
		$$
		(I -\Delta)V\widetilde{\mR_0^\pm}(\lambda)=V\widetilde{\mR_0^\pm}(\lambda)-V(\Delta \widetilde{\mR_0^\pm}(\lambda))f-2\nabla V\cdot \nabla \widetilde{\mR_0^\pm}(\lambda)f-(\Delta V) \widetilde{\mR_0^\pm}(\lambda)f.
		$$
		For any fixed $\lambda\in \R$, using \eqref{eq:resolvent extns} and Proposition~\ref{prop:resolv G}, we have that
		\begin{multline*}
			\|(I -\Delta)V\widetilde{\mR_0^\pm}(\lambda)f(\cdot)\|_{L^1}\\
			\les \int_{\R^{2n}}\bigg(\frac{|V(x)|}{|x-y|^{n-2m+2}}+\frac{|\nabla V(x)|}{|x-y|^{n-2m+1}}+\frac{|\Delta V(x)|}{|x-y|^{n-2m}}\bigg) |f(y)| \, dx\, dy\\
			\leq C_V \int_{\R^n} |f(y)| \int_{\text{supp}(V)} \frac{dx}{|x-y|^{n-2m+2}}dy \leq C \|f\|_1
		\end{multline*}
		Here we use that $n-2m+2<n$ since $m>1$, so that the $x$ integrand is locally integrable.  Thus, for each $\lambda\in \mathbb R$ we have that $V\widetilde{\mR_0^\pm}(\lambda):L^1\to (1-\Delta)^{-1}L^1$ with fixed support inside the support of $V$, hence is compact on $L^1$.
		
		The argument is essentially the same when we consider $V\widetilde{\mR_0^\pm}(\lambda)$ as an operator in $\B(\Kato^\alpha)$, $0 \leq \alpha \leq n-2m$.  Following the above calculations step by step yields the result for $p<\frac{n}{n-2m+2}$
		\begin{multline*}
			\|(I -\Delta)V\widetilde{\mR_0^\pm}(\lambda)f(\cdot)\|_{L^p}\\ \les  \Big\|\int_{\R^{n}}\bigg(\frac{|V(x)|}{|x-y|^{n-2m+2}}+\frac{|\nabla V(x)|}{|x-y|^{n-2m+1}}+\frac{|\Delta V(x)|}{|x-y|^{n-2m}}\bigg)  |f(y)|  \, dy \Big\|_{L^p} \les \|f\|_{\Kato^\alpha}.
		\end{multline*}
		In the last inequality we considered the cases $|x-y|>1$ and $ |x-y|<1$ separately. In the former case  we replaced $|x-y|^{n-2m}$ with $|x-y|^\alpha$. In the latter we put the $L^p$ norm inside by Minkowski integral inequality noting that all singularities belong to $L^p$. By Rellich's theorem, $V\widetilde{\mR_0^\pm}(\lambda)f$ belongs to a compact set in $L^q(\text{supp}(V))$ for $q<\frac{n}{n-2m}$. By Holder, it belongs to a compact set in $\Kato^\alpha$ for $0\leq \alpha<2m$, since $n-2m<2m$, this suffices. 
	\end{proof}
	
	We now prove Lemma~\ref{prop:L1inv}.
	
	\begin{proof}[Proof of Lemma~\ref{prop:L1inv}]
		
		To apply the Fredholm alternative and complete the invertibilty argument, we need to show that a non-trivial solution to $(I+V\widetilde{\mR_0^\pm}(\lambda))\phi=0$ with $\phi\in \Kato^\alpha$ may not exist under the conditions of Theorem~\ref{thm:main}.
		
		First consider the case $\alpha = 0$, where $\phi \in L^1(\R^n)$.
		Assume there is a $\phi\in L^1$ that solves $\phi+V\widetilde{\mR_0}(\lambda)\phi=0$.  Without loss of generality, we take $\lambda\geq 0$ and $\phi$ solves $\phi+ V\mR_0^+(\lambda^{2m}) \phi=0$ by \eqref{eq:resolvent extns}.  If $\lambda<0$, the argument follows through by replacing $\mR_0^+$ with $\mR_0^{-}$.  This implies that $\psi :=\mR_0^+\phi$ solves the equation $\psi+\mR_0^+V\psi=0$.  By Proposition~\ref{prop:resolv G}, for each $\lambda$, the kernel of $\mR_0^+$ is pointwise dominated by the fractional integral operator $I_{2m}$.     
		
		We note that $I_{2m}$ also maps $L^1(\R^n)$ into $L^{2,\frac{n}2-2m-}(\R^n)$.  The mapping bound follows by duality noting that for $2m<n<4m$
		$$
		\sup_y	\int_{\R^n} \frac{1}{|x-y|^{2n-4m} \la x\ra^{4m-n+}} dx \les 1.
		$$

		This means that $\psi = \mR_0^+ \phi$ belongs to $L^{2, \frac{n}2-2m-}(\R^n)$ and is a distributional solution to $H\psi=\lambda \psi$, hence it is a  resonance.  The assumed lack of eigenvalues and resonances (when $\lambda=0$) prohibits such a $\psi$. For $\lambda>0$, we need to conclude that $\psi\in L^2$ in order to rule it out by the assumed absence of embedded eigenvalues. We show this with an argument similar to the limiting absorption principle.
		
		Note that since $V$ is real-valued we have that $\la \mR_0^+\phi,V\mR_0^+\phi\ra$ is real-valued.  Further, since $\phi=-V\mR_0^+ \phi$, that makes $\la \mR_0^+\phi,V\mR_0^+\phi\ra=-\la\mR_0^+\phi,\phi\ra$ and causes the imaginary part of $\la\mR_0^+\phi,\phi\ra$ to vanish.    
		Since we may also identify the imaginary part of $\la\mR_0^+(\lambda^2m)\phi,\phi\ra$ as a multiple of $\int_{|\xi|=\lambda} |\widehat\phi(\xi)|^2\, d\xi$, this implies that $\widehat \phi$ vanishes on $\lambda S^{n-1}$. 
		
		Also note that by factoring $|\xi|^{2m}-\lambda^{2m}$ on the Fourier side, one has 
		\be \label{eqn:ResFactor}
		\mR_0(\lambda^{2m}) \phi=  R_0(\lambda^2) \big[\prod_{\ell=1}^{m-1}R_0(\omega_\ell \lambda^2)\big] \phi
		\ee
		The operator in brackets is Fourier multiplication by $(\lambda^{2m-2} + \lambda^{2m-4}|\xi|^2 + \ldots + |\xi|^{2m-2})^{-1}$, which is real-analytic and behaves like $|\xi|^{-(2m-2)}$ for large $\xi$. We can therefore bound the kernel of that operator by $\frac{e^{-c|x-y|}}{|x-y|^{n-2m+2}}$ for some $c>0$.
		This maps $L^1$ to $L^1\cap L^q$ for $q<\frac{n}{n-2m+2}$.
		Furthermore, $\big[\prod_{\ell=1}^{m-1}R_0(\lambda^2\omega_\ell)\big] \phi$ still has vanishing Fourier transform on the sphere $\lambda S^{n-1}$. We observe that $\frac{2n}{n+4}<\frac{n}{n-2m+2}$ when $2m<n<4m$, so that we may apply Theorem~2 in \cite{GoldHelm} to conclude that $\psi = \mR_0^+(\lambda^{2m})\phi$ is in $L^2(\R^n)$.

		Now consider the general case $\phi \in \Kato^\alpha$. Split $V = V_1 + V_2$, where $V_1 \in C_0$ and $\|V_2\|_\Kato < \eps$ for a small $\eps > 0$ to be determined in a moment.
		Then $\phi +V_2\mR_0(\lambda^{2m})\phi = -V_1\mR_0(\lambda^{2m})\phi$. The pointwise bounds for the free resolvent in Proposition~\ref{prop:resolv G} allow an estimate
		\[
		\|V_1 \mR_0\phi\|_{L^1} \les \iint_{\R^{2n}} \frac{|V_1(x)|}{|x-y|^{n-2m}} |\phi(y)| dx dy \les \int_{\R^n} \frac{|\phi(y)|}{(1+|y|)^{n-2m}} dy < \infty.
		\]
		The constant depends on the size and support of $V_1$, however what is important is that $V_1\mR_0\phi \in L^1$.  Then the $\alpha = 0$ case of Proposition~\ref{prop:KatoMaps} implies that for $\eps$ sufficiently small, $I + V_2\mR_0$ will be a small perturbation of the identity in $\B(L^1)$. Thus
		\[
		\phi = -(I + V_2\mR_0)^{-1}V_1\mR_0\phi \in L^1,
		\]
		which cannot occur without a resonance or eigenvalue if $\lambda = 0$, or an embedded eigenvalue if $\lambda >0$, by the argument just above.
	\end{proof}
	The proof of Lemma~\ref{prop:L1inv} also yields the following corollary.
	
	\begin{corollary}\label{cor:no res}
		
		Under the assumptions of Theorem~\ref{thm:main}, the operator $H=(-\Delta)^m+V$ has no resonances in $(0,\infty)$.
		
	\end{corollary}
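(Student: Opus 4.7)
The plan is to mirror and slightly extend the argument already carried out in the proof of Lemma~\ref{prop:L1inv}, which showed that any non-trivial $L^1$ solution of $\phi + V\widetilde{\mR_0^+}(\lambda)\phi = 0$ at $\lambda > 0$ actually forces $\psi := \mR_0^+(\lambda^{2m})\phi$ to lie in $L^2(\R^n)$, and hence to be an embedded eigenfunction excluded by hypothesis. Since a resonance at $\lambda_0 = \lambda^{2m} > 0$ is by definition a distributional solution of $(H-\lambda_0)\psi = 0$ with $\la x\ra^{-\sigma}\psi \in L^2$ for some $\sigma > 2m-\tfrac{n}{2}$ and $\psi \notin L^2$, it suffices to promote any such $\psi$ to $L^2$, producing the desired contradiction.

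I would argue by contradiction. Suppose $\psi$ is a resonance at $\lambda_0 > 0$, and set $\phi := V\psi$, so that $\phi + V\mR_0^+(\lambda_0)\phi = 0$ and $\psi = -\mR_0^+(\lambda_0)\phi$. The first task is to place $\phi$ in some $\Kato^\alpha$, $0 \leq \alpha \leq n-2m$, and eventually in $L^1$. The weighted decay of $\psi$ combined with $V \in \Kato$ yields an initial (weak) bound on $\phi$; from there I would bootstrap, re-substituting $\psi = -\mR_0^+(\lambda_0)\phi$ into $\phi = V\psi$ and applying the mapping bounds of Proposition~\ref{prop:KatoMaps} to move $\phi$ into progressively smaller Kato classes at each iteration. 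The splitting $V = V_1 + V_2$ with $V_1 \in C_0$ and $\|V_2\|_{\Kato^{n-2m}}$ small, exactly as in the proof of Lemma~\ref{prop:L1inv}, makes the perturbation $I + V_2\mR_0^+(\lambda_0)$ invertible on the relevant spaces and pushes $\phi$ into $L^1$.

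Once $\phi \in L^1$, the closing argument of Lemma~\ref{prop:L1inv} applies verbatim: the real-valuedness of $V$ together with $\phi = -V\mR_0^+(\lambda_0)\phi$ forces $\Im\la \mR_0^+(\lambda_0)\phi,\phi\ra = 0$, which by Plancherel implies $\widehat{\phi}$ vanishes on the sphere $\lambda S^{n-1}$. Factoring the resolvent as in \eqref{eqn:ResFactor} so that the auxiliary operator $\prod_{\ell=1}^{m-1}R_0(\omega_\ell \lambda^2)$ maps $L^1$ to $L^1 \cap L^q$ for all $q < \tfrac{n}{n-2m+2}$, and then invoking the restriction-type bound from~\cite{GoldHelm} (which applies since $\tfrac{2n}{n+4} < \tfrac{n}{n-2m+2}$ in the range $2m<n<4m$), yields $\psi \in L^2(\R^n)$. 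This produces an embedded eigenvalue at $\lambda_0 > 0$, contradicting the standing hypothesis and ruling out positive resonances.

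The main obstacle is the bootstrap in the first step: propagating from the weak weighted-$L^2$ information defining a resonance into membership in $\Kato^\alpha$, and ultimately into $L^1$, without losing the solution or gaining spurious mass. Once that membership is secured, everything else is a direct quotation of the machinery already developed for Lemma~\ref{prop:L1inv}.
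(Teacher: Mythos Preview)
Your strategy---reducing a putative positive resonance $\psi$ to an element $\phi = V\psi$ in the $L^1$ kernel of $I + V\mR_0^+(\lambda_0)$ and then quoting the closing part of Lemma~\ref{prop:L1inv}---is correct in spirit, and once $\phi \in L^1$ is secured the remainder is indeed verbatim. However, the paper takes a much shorter route: it gives no separate proof of the corollary at all, merely noting that ``the proof of Lemma~\ref{prop:L1inv} also yields'' it. The implicit reasoning is that the introduction explicitly permits resonances to be ``characterized in terms of non-invertibility of certain operators related to the free resolvent.'' With that reading, a positive resonance \emph{is} a nonzero $\phi$ in the kernel of $I + V\mR_0^\pm(\lambda_0)$ on $\Kato^\alpha$ (equivalently $L^1$, by the last paragraph of that proof), and the argument in Lemma~\ref{prop:L1inv} already shows that for $\lambda_0 > 0$ any such $\phi$ produces $\psi = \mR_0^+\phi \in L^2$, an embedded eigenfunction excluded by hypothesis. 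That is the entire argument the paper intends.

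Your approach instead takes the weighted-$L^2$ definition as primary and attempts to bootstrap $\phi$ back into $L^1$. That is more self-contained but also genuinely more work, and the step you correctly flag as the main obstacle is not resolved in your outline: from $\psi \in L^{2,-\sigma}$ and $V \in \Kato$ alone there is no immediate reason $\phi = V\psi$ lies in any $\Kato^\alpha$, so the splitting $V = V_1 + V_2$ and the invertibility of $I + V_2\mR_0^+$ on $L^1$ cannot be invoked until you first place $\phi$ in some ambient space on which that Neumann-series inverse is defined and agrees with the $L^1$ inverse. There is also a quiet assumption in your sketch that a weighted-$L^2$ distributional solution necessarily satisfies $\psi = -\mR_0^+(\lambda_0)\phi$ rather than $\mR_0^-$ or a combination; this is a radiation-condition issue one has to address when starting from the distributional definition. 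The paper sidesteps both points by adopting the operator-theoretic characterization from the outset.
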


	It remains to check that the operators $\mF^{-1}(V\widetilde{\mR_0})$ satisfy the conditions a) and b) of Theorem~\ref{thm:Wiener}.  Note that 
	\be\label{eqn:T defn}
	[V\widetilde{\mR_0}](\lambda,x,y)= \frac{V(x)}{|x-y|^{n-2m}} e^{i\lambda |x-y|} \widetilde F(\lambda|x-y|),  
	\ee
	where $\widetilde F(r)=F^{+}(r)$ for $r\geq 0$ and $F^{-}(-r)$ for $r<0$. An application of Proposition~\ref{prop:KatoMaps} shows that that the integral kernel $\frac{V(x)}{|x-y|^{n-2m}}$ defines a bounded operator on $\Kato^\alpha$ for any $\alpha$ in the range $0 \leq \alpha \leq n-2m$.

	We first prove that $\mF \widetilde F \in L^1(\R) $ when $2m<n<4m-1$, and that it is a finite measure (a Dirac-$\delta$ plus an $L^1$  function) when $n=4m-1$. Let $\chi$ be a smooth cutoff for $[-1,1]$. Note that, by Proposition~\ref{prop:resolv G}, the compactly supported function $\widetilde F(r)\chi(r)$ is  continuous and its derivative is bounded, therefore the second claim in Lemma~\ref{lem:fhat L1} applies.  When $2m<n<4m-1$, the $j^{th}$ derivative of the function $\widetilde F(r)(1-\chi(r))$    decays at least as fast as  $\la r\ra^{-1/2-j}$ , $j=0,1$.  Therefore, its Fourier transform is in $L^1$ by the first claim in Lemma~\ref{lem:fhat L1}.
	
	The case $n=4m-1$ also follows from Lemma~\ref{lem:fhat L1} since $\widetilde F(r)-c$ decays like $\frac1{|r|} $ and derivatives decay faster as above. Therefore $\mF(\widetilde {F}) -c \delta_0 \in L^1$.   The lemma below completes the proof of Proposition~\ref{prop:inverses} when $0\leq \alpha<n-2m$.  The case of $\alpha=n-2m$ will be deduced afterwards with a different argument.
	
	\begin{lemma}\label{lem:wiener check}
		
		Under the conditions of Theorem~\ref{thm:main}, the operator $T:=\widehat{ V\widetilde{\mR_0}}$
		satisfies conditions a) and b) in Theorem~\ref{thm:Wiener} with respect to spaces
		$\U_{\Kato^\alpha}$ provided  $0 \leq \alpha < n-2m$.
		
	\end{lemma}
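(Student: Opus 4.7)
The plan is to unpack the kernel $T = \mF_\lambda(V\widetilde{\mR_0})$ from~\eqref{eqn:T defn}, obtaining
\[
T(\rho, x, y) = \frac{V(x)}{|x-y|^{n-2m+1}} (\mF \widetilde F)\!\left(\frac{\rho-|x-y|}{|x-y|}\right),
\]
and to reduce both Wiener conditions to the decay and $L^1$ translation continuity of $\mF\widetilde F$ established above. A change of variables $s=(\rho-|x-y|)/|x-y|$ gives $M(T)(x,y) = |V(x)||x-y|^{-(n-2m)}\|\mF\widetilde F\|_{\mathcal M(\R)}$, bounded on $\Kato^\alpha$ for $0\leq\alpha\leq n-2m$ by Proposition~\ref{prop:KatoMaps}, so $T \in \U_{\Kato^\alpha}$. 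The same change of variables turns condition (b) into a decay statement for $|V(x)||x-y|^{-(n-2m)} \int_{|s+1|\geq R/|x-y|}|\mF \widetilde F|(ds)$. Splitting at $|x-y|\asymp R^{1/2}$, the region $|x-y|\leq R^{1/2}$ sees a tail of $\mF \widetilde F$ past $R^{1/2}$ that vanishes uniformly, while on $|x-y|>R^{1/2}$ I use the bound $|x-y|^{-(n-2m)}\leq R^{-(n-2m-\alpha)/2}|x-y|^{-\alpha}$ (valid because $\alpha<n-2m$) to reduce the spatial operator to one bounded by $R^{-(n-2m-\alpha)/2}\|V\|_{\Kato^\alpha}$, which vanishes as $R\to\infty$.

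For condition (a) in the generic range $2m<n<4m-1$ where $\mF \widetilde F\in L^1$, take $N=1$. The same change of variables yields
\[
M(T-T(\cdot-\delta))(x,y) = \frac{|V(x)|}{|x-y|^{n-2m}}\,\bigl\|\mF \widetilde F(\cdot)-\mF \widetilde F(\cdot-\delta/|x-y|)\bigr\|_{L^1}.
\]
Continuity of translation on $L^1$ shrinks the inner norm to $0$ as $\delta\to 0$ uniformly on $\{|x-y|\geq\eta\}$, while the near-diagonal region $\{|x-y|<\eta\}$ contributes at most $2\|\mF \widetilde F\|_{L^1}$ times the $\Kato^\alpha$-operator norm of $\chi_{|x-y|<\eta}|V(x)||x-y|^{-(n-2m)}$. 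The latter tends to $0$ as $\eta\to 0$ because $V$ is the $\Kato^{n-2m}$-limit of $C_0$ functions, which gives $\sup_y \int_{|x-y|<\eta}|V(x)||x-y|^{-(n-2m)}\,dx \to 0$. A standard $(\eta,\delta)$-diagonalization closes the generic case.

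The main obstacle is the endpoint $n=4m-1$, where $\mF \widetilde F = c\delta_0+h$ with $h\in L^1$, so the Dirac piece $T_{\mathrm{sing}}(\rho,x,y)=cV(x)|x-y|^{-(n-2m)}\delta(\rho-|x-y|)$ is not $\rho$-translation continuous and breaks condition (a) at $N=1$. I take $N=2$ and decompose $T^2$ via $T=T_{\mathrm{sing}}+T_{\mathrm{ac}}$ into four convolution pieces. The three pieces containing at least one $h$ factor remain tractable because convolving in $\rho$ with a Dirac merely shifts the $L^1$ factor by a spatially varying amount, producing estimates of the form $\|h(\cdot)-h(\cdot+\delta/B)\|_{L^1}$ at various scales $B$ that yield to the same $\eta$-splitting argument. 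The purely singular convolution
\[
T_{\mathrm{sing}}^{\ast 2}(\rho,x,y) = c^2 V(x)\int_{\R^n}\frac{V(z)\,\delta(\rho-|x-z|-|z-y|)}{|x-z|^{n-2m}|z-y|^{n-2m}}\,dz
\]
is, by the coarea formula on the prolate spheroids $\{z: |x-z|+|z-y|=\rho\}$ (smooth $(n-1)$-surfaces for $\rho>|x-y|$ and $n\geq 3$), an honest $L^1_\rho$ function supported on $(|x-y|,\infty)$ whose translation modulus of continuity in $\rho$ is uniformly controlled once $|x-y|$ is bounded away from $0$. Combined with iterated applications of Proposition~\ref{prop:KatoMaps} to absorb the spatial factors, the $\eta$-splitting in $(x,y)$ then delivers equicontinuity of $T^2$ on $\Kato^\alpha$.
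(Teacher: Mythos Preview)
For the generic range $2m<n<4m-1$ and for condition~b) in all cases, your argument is essentially the paper's; your $\eta$-splitting in condition~a) is in fact slightly more careful than the paper's, which glosses over the fact that the translation in the rescaled variable is $\delta/|x-y|$ rather than $\delta$.

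The substantive difference is at the endpoint $n=4m-1$. The paper takes $N=3$ and uses weighted-$L^2$ resolvent decay (a limiting absorption argument together with the factorization~\eqref{eqn:ResFactor}) to obtain
\[
\big\|(V\widetilde{\mR_0})^3(\lambda,\cdot,y)\big\|_{\Kato^\alpha}\ \les\ \la\lambda\ra^{1-2m}\,\la y\ra^{2m-n}.
\]
Integrability in $\lambda$ then gives a pointwise bound on $\widehat{T^3}(\rho,\cdot,y)$ and on its $\rho$-differences that feeds directly into the $\U_{\Kato^\alpha}$ norm via Minkowski and duality with $\la y\ra^{2m-n}$; the argument finishes by combining with condition~b) for $T$. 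Your $N=2$ coarea approach is geometrically natural, and the mixed pieces $T_{\mathrm{sing}}T_{\mathrm{ac}}$, $T_{\mathrm{ac}}T_{\mathrm{sing}}$, $T_{\mathrm{ac}}^2$ do succumb to the $\eta$-splitting as you describe.

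Your treatment of the purely singular piece $T_{\mathrm{sing}}^{\ast 2}$, however, has a genuine gap. The assertion that its $L^1_\rho$ translation modulus is ``uniformly controlled once $|x-y|$ is bounded away from $0$'' is not proved, and even if granted in the form $\sup_{|x-y|\geq\eta}M\big(T_{\mathrm{sing}}^{\ast2}-T_{\mathrm{sing}}^{\ast2}(\cdot-\delta)\big)(x,y)\leq\omega(\delta)$, it does \emph{not} yield smallness in $\U_{\Kato^\alpha}$ for $\alpha>0$: a constant kernel supported in $\{x\in\mathrm{supp}(V)\}$ is bounded on $L^1=\Kato^0$ but not on $\Kato^\alpha$ when $\alpha>0$, since $\int|f(y)|\,dy$ need not be finite for $f\in\Kato^\alpha$. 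What you actually need is a \emph{relative} bound of the type
\[
M\big(T_{\mathrm{sing}}^{\ast2}-T_{\mathrm{sing}}^{\ast2}(\cdot-\delta)\big)(x,y)\ \les\ \omega(\delta)\,\frac{|V(x)|}{|x-y|^{n-2m}},
\]
so that Proposition~\ref{prop:KatoMaps} applies. Extracting such a bound from the ellipsoid geometry---including the degenerate endpoint $\rho\downarrow|x-y|$, the focal singularities of $|x-z|^{2m-n}|z-y|^{2m-n}$ on the spheroid, and uniformity as $|y|\to\infty$---is plausible but requires substantial additional analysis that your proposal does not supply. The paper's $N=3$ route sidesteps all of this by trading the geometry for the off-the-shelf $\la\lambda\ra^{-1}$ decay of $R_0(\lambda^2)$ between weighted $L^2$ spaces.
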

	
	\begin{proof}
		We first consider the case $2m < n < 4m-1$. By \eqref{eqn:T defn}, we have
		$\widehat{V\widetilde{\mR_0}}(\rho, x, y) = \frac{V(x)}{|x-y|^{n-2m}} \frac{1}{|x-y|}\widehat{\widetilde F}\big(\frac{\rho}{|x-y|} - 1\big)$. Integrating the absolute value with respect to $\rho$ yields
		\[
		M(\widehat{V\widetilde{\mR_0}})(x,y) = \frac{|V(x)|}{|x-y|^{n-2m}} \|\widehat{\widetilde F}\|_{L^1}.
		\]
		Then Proposition~\ref{prop:KatoMaps} with $\alpha = \beta$ indicates that
		$
		\|M(\widehat{V\widetilde{\mR_0}})\|_{B(\Kato^\alpha)} \les \|V\|_{\Kato^{n-2m}} \|\widehat{\widetilde F}\|_{L^1}$  for all  $0 \leq \alpha \leq n-2m$.  In other words,
		\begin{equation} \label{eq:U1VK}
			\|\widehat{V\widetilde{\mR_0}} \|_{\U_{\Kato^\alpha}} \les \|V\|_{\Kato^{n-2m}} \|\widehat{\widetilde F}\|_{L^1}.
		\end{equation}
		
		We now check the conditions a, b in the statement of Theorem~\ref{thm:Wiener} when $2m<n<4m-1$. 
		By a similar calculation as above, we have (with $N=1$)
		$$  
		\norm[[\widehat{ V\widetilde{\mR_0}}](\rho,x,y) - [\widehat{ V\widetilde{\mR_0}}](\rho-\delta,x,y)][\U_{\Kato^\alpha}] \leq  \|V\|_{\mathcal K^{n-2m}} \big\| \widehat{\widetilde F} (\cdot)-\widehat{\widetilde F}(\cdot-\delta)\big\|_{L^1}\to 0  
		$$
		as $\delta$ goes to zero by norm continuity in $L^1$. The condition b follows similarly. Take $V \in \Kato$, the $\Kato$-norm closure of compactly supported continuous functions, $C_0$. Using the bound \eqref{eq:U1VK}, it suffices to prove the claim assuming that $V\in C_0$.  We have   
		\[
		M(\chi_{|\rho|\geq R} \widehat{V\widetilde{\mR_0}})(x,y) = \frac{|V(x)|}{|x-y|^{n-2m}} \int_{|\rho|>R/|x-y|} |\widehat{\widetilde F}(\rho-1)|  d\rho.
		\]
		Everywhere inside the domain $|\rho| > R/|x-y|$, at least one of $|\rho|$ or $|x-y|$ must be greater than $\sqrt{R}$. Thus
		\[
		M(\chi_{|\rho|\geq R} \widehat{V\widetilde{\mR_0}})(x,y)  \leq \frac{|V(x)|}{|x-y|^{n-2m}} \int_{|\rho|>\sqrt{R}} |\widehat{\widetilde F}(\rho-1)|  d\rho +
		\frac{|V(x)| \chi_{|x-y| > \sqrt{R}}}{|x-y|^{n-2m}} \|\widehat{\widetilde F}\|_{L^1}.
		\]
		
		The first term on the right defines a bounded operator on $\Kato^\alpha$ by Proposition~\ref{prop:KatoMaps}, with norm controlled by $\|V\|_{\Kato} \| \widehat{\widetilde F}(\rho - 1)\|_{L^1(|\rho| > \sqrt{R})}$. To determine the $B(\Kato^\alpha)$ norm of the second term, we observe that
		\[
		\int_{\R^n} \frac{|V(x)|\chi_{|x-y|>\sqrt{R}}}{|z-x|^\alpha|x-y|^{n-2m}}dx \leq R^{\frac{\alpha + 2m - n}{2}} \int_{\R^n} \frac{|V(x)|}{|z-x|^\alpha|x-y|^\alpha}dx  \les \frac{R^{\frac{\alpha + 2m-n}{2}}\|V\|_{\Kato^\alpha}}{|z-y|^\alpha},
		\]
		hence
		\[
		\sup_z \int_{\R^n} \frac{|V(x)|\chi_{|x-y|>\sqrt{R}}}{|z-x|^\alpha|x-y|^{n-2m}} f(y) dx dy
		\les  R^{\frac{\alpha + 2m-n}{2}} \|V\|_{\Kato^\alpha}\, \sup_z \int_{\R^n} \frac{|f(y)|}{|z-y|^\alpha} dy .
		\]
		
		Combining the two terms yields
		\[
		\|\chi_{|\rho| \geq R}\widehat{V\widetilde{\mR_0}}\|_{\U_{K^\alpha}}
		\les \|V\|_\Kato \|\widehat{\widetilde F}(\rho - 1)\|_{L^1(|\rho| > \sqrt{R})} + R^{\frac{\alpha + 2m - n}{2}}\|V\|_{\Kato^\alpha},
		\]
		which decreases to zero as $R \to \infty$ since $\alpha < n - 2m$.

		We now turn to the case $n = 4m-1$. It still suffices to verify conditions a and b for all potentials $V \in C_0$ and use \eqref{eq:U1VK} to extend the property to $V \in \Kato$.  The argument for condition b is largely unchanged when $n = 4m - 1$, even though $\widehat{\widetilde F}$ now contains a point mass  $c \delta_0$ in addition to a function in $L^1(\R)$. The essential properties in that argument were
		\[
		\lim_{R \to \infty} \int_{|\rho| > \sqrt{R}} |\widehat{\widetilde F}(\rho - 1)| d\rho = 0
		\]  
		and that $\widehat{\widetilde F}$ has finite total mass, both of which are still true.

		It remains to check condition a when $n=4m-1$ assuming that $V\in C_0$. This condition does not hold for $N=1$ because the total mass of measures $\delta(\,\cdot\,) - \delta(\,\cdot\, - \eps)$ remains large as $\eps \to 0$.  We will show that it holds for $N=3$ instead.
		
		We may assume that $V$ is supported in the ball of radius $M$ centered at the origin.  It follows from the boundedness of $\widetilde F(r)$ in \eqref{eqn:T defn}, which is guaranteed by Proposition~\ref{prop:resolv G}, that $\widetilde{\mR_0}(\lambda, x, y)$ is dominated poinwise by $|x-y|^{2m-n}$ uniformly in $\lambda$. It will be useful to note that for $\sigma \geq 0$, 
		\[
		\int_{|x| < M} \frac{\la x\ra^{2\sigma}}{|x-y|^{2n- 4m}}\,dx \les \frac{1}{\la y\ra^{2n -4m}}
		\]
		with constants depending on $M$ and $\sigma$. We are taking advantage of the fact that $4m > n$ here. Then since $V$ is a bounded function,
		\[
		\|V\widetilde{\mR_0}(\lambda, \,\cdot\, , y)\|_{L^{2,\sigma}} \les \frac{1}{\la y \ra^{n-2m}}.
		\]
		This estimate is sufficient to show that $\|V\widetilde{\mR_0}(\lambda)\|_{\B(L^{2,\sigma})} \les 1$ for all $\lambda \in \R$ and any choice of $\sigma \geq 0$. For $|\lambda| \gtrsim 1$ and larger $\sigma$, a stronger estimate is possible. 
		
		We refer back to the factorization of the free resolvent in~\eqref{eqn:ResFactor}. The operator in brackets is $\lambda^{2-2m}$ times Fourier multiplication with a symbol $(1 + (|\xi|/\lambda)^2  + \ldots + (|\xi|/\lambda)^{2m-2})^{-1}$ whose derivatives up to order $\sigma +1$ are uniformly bounded. Hence this is a bounded operator on $L^{2, \sigma}$ with norm controlled by $\lambda^{2-2m}$.  It is a well-known property of the Schr\"odinger resolvent that $R_0(\lambda^2)$ maps $L^{2, \sigma}$ to $L^{2, -\sigma}$ with norm controlled by $\lambda^{-1}$ provided $\sigma > \frac12$.  Multiplication by $V \in C_0$ maps this back to $L^{2, \sigma}$.  Put together, these estimates show that
		\[ 
		\|V\widetilde{\mR_0}(\lambda)f\|_{L^{2, \sigma}} \les \la \lambda\ra^{1-2m} \|f\|_{L^{2,\sigma}}.
		\]
		
		Choose a fixed $\sigma > 2m - \frac{n}{2}$. Pointwise domination of $\widetilde{\mR_0}(\lambda, x, y)$ by $|x-y|^{2m-n}$ also shows that $V\widetilde{\mR_0}(\lambda)$ maps $L^{2,\sigma}$ to $\Kato^\alpha$ with a bound independent of $\lambda$. Applying all these mapping bounds in order shows that
		\be \label{eqn:N=3} 
		\|(V\widetilde{\mR_0})^3(\lambda, \,\cdot\, , y)\|_{\Kato^\alpha} \les \frac{\la \lambda\ra^{1-2m}}{\la y \ra^{n-2m}}.
		\ee
		
		Note that the difference of Fourier transforms can be written
		\[
		\widehat{(V\widetilde{\mR_0})^3}(\rho) - \widehat{(V\widetilde{\mR_0})^3}(\rho-\delta) = \int_\R e^{-i\rho \lambda} (1 - e^{i\delta \lambda})(V\widetilde{\mR_0})^3(\lambda)\, d\lambda.
		\]
		Then~\eqref{eqn:N=3} and the crude bound $|1 - e^{i\delta\lambda}| \leq \delta |\lambda|$ show that
		\[
		\big\| \widehat{(V\widetilde{\mR_0})^3}(\rho, \,\cdot\, ,y) - \widehat{(V\widetilde{\mR_0})^3}(\rho-\delta, \,\cdot\, ,y) \big\|_{\Kato^\alpha} \les \frac{\delta}{\la y\ra^{n-2m}}.
		\]
		The $\Kato^\alpha$ norm is not changed by taking absolute values, so we can use the Minkowski inequality to bound
		\[
		\Big\| \int_\R \chi_{|\rho|>R} \big|\widehat{(V\widetilde{\mR_0})^3}(\rho, \,\cdot\, ,y) - \widehat{(V\widetilde{\mR_0})^3}(\rho-\delta, \,\cdot\, ,y)\big| d\rho  \Big\|_{\Kato^\alpha} \les \frac{\delta R}{\la y\ra^{n-2m}}
		\]
		for any $R < \infty$. We note that $\la y \ra^{2m - n}$ belongs to the dual space of $\Kato^\alpha$ for $0 \leq \alpha \leq n-2m$. Thus
		\[
		\Big\| \chi_{|\rho|>R} \big[\widehat{(V\widetilde{\mR_0})^3}(\rho) - \widehat{(V\widetilde{\mR_0})^3}(\rho-\delta) \big]  \Big\|_{\U_{\Kato^\alpha}} \les \delta R,
		\]
		which vanishes as $\delta \to 0$. It follows that for any $R < \infty$,
		\begin{align*}
			\lim_{\delta \to 0} \|\widehat{(V\widetilde{\mR_0})^3}(\rho - \delta) - \widehat{(V\widetilde{\mR_0})^3}(\rho)\|_{\U_{\Kato^\alpha}}
			&\leq \lim_{\delta \to 0}\|\chi_{|\rho| > R}[\widehat{(V\widetilde{\mR_0})^3}(\rho - \delta) - \widehat{(V\widetilde{\mR_0})^3}(\rho)]\|_{\U_{\Kato^\alpha}} \\
			&\leq 2 \|\chi_{|\rho| > R}\widehat{(V\widetilde{\mR_0})^3}(\rho)\|_{\U_{\Kato^\alpha}}. 
		\end{align*}
		
		We did not directly verify condition b) for the iterated operator $(V\widetilde{\mR_0})^3$, but thanks to the convolution structure in $\rho$,
		$$
		\|\chi_{|\rho| > R}\widehat{(V\widetilde{\mR_0})^3}(\rho)\|_{\U_{\Kato^\alpha}} \les C(V) \|\chi_{|\rho|> R/3} \widehat{V\widetilde{\mR_0}}(\rho)\|_{\U_{\Kato^\alpha}}.
		$$
		Taking $R \to \infty$ and invoking condition b) for $V\widetilde{\mR_0}$ completes the verification of condition a).
	\end{proof}

	Curiously, the edge case $\alpha = n-2m$ cannot be proved as a direct application of Theorem~\ref{thm:Wiener} with any choice of $N > 1$.
	Instead we write
	\[
	(I + V\widetilde{\mR_0})^{-1} = I - V(I+\widetilde{\mR_0}V)^{-1}\widetilde{\mR_0}
	\]
	We already proved that $(I + \widetilde{\mR_0}V)^{-1} \in \U_{L^\infty}$.  It follows from~\eqref{eqn:T defn} and Lemma~\ref{lem:GFFourier} with $\gamma=k=0$ that for any $f\in \Kato^{n-2m}$ we have (for some $g\in L^1(\R)$)
	$$
	\|\widehat{\widetilde{\mR_0}}f\|_{\U_{L^\infty}}\leq \| g(\rho)\|_{L^1_\rho} \sup_{x\in \mathbb R^n} \int_{\R^n} \frac{|f(x)|}{|x-y|^{n-2m}}\, dx \les \|f\|_{\Kato^{n-2m}}.
	$$
	In particular, this shows that 
	$\widehat{\widetilde{\mR_0}} \in \U_{L^\infty, \Kato^{n-2m}}$. Finally, multiplying by $V$ on the left brings us from $L^\infty$ back to $\Kato^{n-2m}$.
	From here we may conclude that $(I + V\widetilde{\mR_0})^{-1} \in \U_{\Kato^{n-2m}}$. This completes the final case in the proof of Proposition~\ref{prop:inverses}.

	\section*{Statements and Declarations}
	On behalf of all authors, the corresponding author states that there is no conflict of interest. The first author was partially supported by the NSF grant  DMS-2154031 and Simons Foundation Grant 634269.  The second author is partially supported by Simons Foundation
	Grant 635369. The third author is partially supported by Simons Foundation
	Grant 511825.
	No datasets were generated or analysed during the current study.

\end{document}